\numberwithin{equation}{section}
\theoremstyle{plain}
\newtheorem{theorem}[equation]{Theorem}
\newtheorem{conjecture}[equation]{Conjecture}
\newtheorem{lemma}[equation]{Lemma}
\newtheorem{corollary}[equation]{Corollary}
\theoremstyle{definition}
\newtheorem{remark}[equation]{Remark}
\theoremstyle{remark}
\newcommand{\R}{\mathbb{R}}
\newcommand{\B}{\mathbb{B}}
\newcommand{\uhp}{\mathbb{H}}
\newcommand{\sh}{\mathrm{sh}\,}
\newcommand{\ch}{\mathrm{ch}\,}
\renewcommand{\th}{\mathrm{th}\,}
\def\Re{\mathop{\rm Re}\nolimits}
\def\Im{\mathop{\rm Im}\nolimits}
\DeclareMathOperator{\arsh}{arsh}
\newcounter{alphabet}
\newcounter{minutes}\setcounter{minutes}{\time}
\newcounter{hours}\setcounter{hours}{\time}
\begin{document}
\bibliographystyle{amsplain}
\title
{Metrics and quasimetrics induced by point pair function
}

\def\thefootnote{}
\footnotetext{
\texttt{\tiny File:~\jobname .tex,
          printed: \number\year-\number\month-\number\day,
          \thehours.\ifnum\theminutes<10{0}\fi\theminutes}
}
\makeatletter\def\thefootnote{\@arabic\c@footnote}\makeatother

\author[D. Dautova]{Dina Dautova}
\author[S. Nasyrov]{Semen Nasyrov}
\author[O. Rainio]{Oona Rainio}
\author[M. Vuorinen]{Matti Vuorinen}

\keywords{Hyperbolic geometry, metric, point pair function, quasi-metric, triangle inequality.}
\subjclass[2010]{Primary 51M10; Secondary 30C62}
\begin{abstract}
We study the point pair function in subdomains $G$ of $\mathbb{R}^n$. We prove that, for every domain $G\subsetneq\mathbb{R}^n$, this function is a quasi-metric with the constant less than or equal to $\sqrt{5}\slash2$. Moreover, we show that it is a metric in the domain $G=\mathbb{R}^n\setminus\{0\}$ with $n\geq1$. We also consider generalized versions of the point pair function, depending on an arbitrary constant $\alpha>0$, and show that in some domains these generalizations are metrics if and only if $\alpha\leq12$.
\end{abstract}
\maketitle

\textbf{Author information.}

{\tiny
\noindent Dina Dautova$^{1}$ email: \texttt{dautovadn@gmail.com} ORCID: 0000-0002-7880-7598\\
Semen Nasyrov$^{1}$ email: \texttt{semen.nasyrov@yandex.ru} ORCID: 0000-0002-3399-0683\\
Oona Rainio$^{*2}$ email: \texttt{ormrai@utu.fi} ORCID: 0000-0002-7775-7656\\
Matti Vuorinen$^{2}$ email: \texttt{vuorinen@utu.fi} ORCID: 0000-0002-1734-8228\\
*: Corresponding author\\
1: Institute of Mathematics and Mechanics, Kazan Federal University, 420008 Kazan, Russia\\
2: Department of Mathematics and Statistics, University of Turku, FI-20014 Turku, Finland
}





\section{Introduction}


During the past few decades, several authors have contributed to the study of various metrics important for the
geometric function theory. In this field of research, \emph{intrinsic metrics} are the most useful because they measure distances in the way that takes into account not only how close the points are to each other but also how
the points are located with respect to the boundary of the domain. These metrics are often used to estimate the hyperbolic metric and, while they share some but not all of its properties, intrinsic metrics are much simpler than the hyperbolic metric and therefore more applicable.

Let $G$ be a proper subdomain of the real $n$-dimensional Euclidean space $\R^n$.  Denote by $|x-z|$ the Euclidean distance in $\mathbb{R}^n$ and by $d_G(x)$ the distance from a point $x\in G$ to the boundary $\partial G$, i.e. $d_G(x):=\inf\{|x-z|\text{ }|\text{ }z\in\partial G\}$. One of the most interesting  intrinsic measures of distance in $G$ is the \emph{point pair function} $p_G:G\times G\to[0,1)$ defined as
\begin{align}\label{ppf}
p_G(x,y)=\frac{|x-y|}{\sqrt{|x-y|^2+4d_G(x)d_G(y)}}\,,\quad x,y\in G.
\end{align}
This function was first introduced in \cite[p. 685]{chkv}, named in \cite{hvz} and further studied in \cite{hkvbook, fss, inm, sinb, sqm}. In \cite[Rmk 3.1, p. 689]{chkv} it was noted that the function $p_G$ is not a metric when the domain $G$ coincides with the unit disk $\B^2$.

In order to be a metric, a function needs to be fulfill certain three properties, the third of which is called the \emph{triangle inequality}. The point pair function has all the other properties of a metric but it only fulfills a relaxed version \eqref{ine_quasi} of the original triangle inequality, as explained in Section 2. We call such functions \emph{quasi-metrics} and study what is the best constant $c$ such that the generalized inequality \eqref{ine_quasi} holds. Namely, it was proven in \cite[Lemma 3.1, p. 2877]{fss} that the point pair function is a quasi-metric on every domain $G\subsetneq\R^n$ with a constant less than or equal to $\sqrt{2}$, but this result is not sharp for any domain $G$.

For this reason, we continue here the investigations initiated in the paper \cite{fss}. We give an answer to the question posed in  \cite[Conj. 3.2, p. 2877]{fss} by proving in Theorem \ref{thm_ppfquasiconstant} that, for all domains $G\subsetneq\R^n$, the point pair function is a quasi-metric with a constant less than or equal to $\sqrt{5}\slash2$. For the domain $G=\R^n\backslash\{0\}$ with $n\geq1$, we prove Theorem \ref{thm_pInR0}  which states that the point pair function $p_G$ defines a metric. In Lemma \ref{lem_csharpness}, we explain for which domains the constant $\sqrt{5}\slash2$ is sharp. 

We also investigate what happens when the constant $4$ in \eqref{ppf} is replaced by another constant $\alpha>0$ to define a generalized version $p^\alpha_G$ of the point pair function $p_G$ as in \eqref{ppfalpha}. In particular, we prove that, for $\alpha\in (0,12]$, this function $p^\alpha_G$ is a metric if 
$G$ is the positive real axis $\R^+$ (Theorem \ref{thm_alphappfR}), the punctured space $\R^n\backslash\{0\}$ with $n\geq2$ (Theorem \ref{pametr}), or the upper half-space $\uhp^n$ with $n\geq2$ (Theorem \ref{thm_mppfInH}). Furthermore, we also show in Theorem \ref{thm_alphappfnotmetricInB} that the function $p^\alpha_G$ is not a metric for any values of $\alpha>0$ in the unit ball $\B^n$.

The structure of this article is as follows. In Section 2, we give necessary definitions and notations. First,  in Section 3, we study the point pair function in the $1$-dimensional case and then consider the general $n$-dimensional case in Section 4. In Section 5, we inspect the generalized version $p^\alpha_G$ of the point pair function in several domains. At last, in Section 6, we state some open problems.


\section{Preliminaries}

In this section, we introduce some notation and recall a few necessary definitions related to metrics.

We will denote by $[x,y]$ the Euclidean line segment between two distinct points $x$, $y\in\R^n$. For every $x\in\R^n$ and $r>0$, $B^n(x,r)$ is the $x$-centered open ball of radius $r$, and
$S^{n-1}(x,r)$ is its boundary sphere. If $x=0$ and $r=1$ here, we simply write $\B^n$ instead of $B^n(0,1)$. Let $\uhp^n$ denote the upper-half space $\{x=(x_1,...,x_n)\in\R^n\text{ }|\text{ }x_n>0\}$. Furthermore, hyperbolic sine, cosine and tangent are denoted as sh, ch and th, respectively, and their inverse functions are arsh, arch, and arth.

For a non-empty set $G$, a \emph{metric} on $G$ is a function $d:G\times G\to[0,\infty)$ such that for all $x$, $y$, $z\in G$ the following three properties hold:\smallskip

\noindent (1) Positivity: $d(x,y)\geq 0$, and $d(x,y)=0$ if and only if $x=y$,

\noindent (2) Symmetry: $d(x,y)=d(y,x)$,

\noindent (3) Triangle inequality: $d(x,y)\leq d(x,z)+d(z,y)$.\smallskip

If a function $d:G\times G\to[0,\infty)$ 
satisfies (1),  (2) and the inequality 
\begin{align}\label{ine_quasi}
d(x,y)\leq c(d(x,z)+d(z,y))
\end{align}
for all $x$, $y$, $z\in G$ with some constant $c\ge1$ independent of the points $x,y,z$, then the function $d$ is a \emph{quasi-metric} \cite[p. 4307]{ps09}, \cite[p. 603]{s15}, \cite[Def. 2.1, p. 453]{x09}. Note that this term "quasi-metric" has slightly different meanings in some works, see for instance \cite{bm09,crt14,sra09}.

The point pair function defined in \eqref{ppf} is a metric for some domains $G\subsetneq\R^n$ and a quasi-metric for other domains \cite[Lemma 3.1, p. 2877]{fss}. Note that \emph{the triangular ratio metric} 
\begin{align*}
s_G(x,y)=\frac{|x-y|}{\inf_{z\in\partial G}(|x-z|+|z-y|)}\,,
\end{align*}
introduced by P. H\"ast\"o \cite{h}, is a metric for all domains $G\subsetneq\R^n$ \cite[Lemma 6.1, p. 53]{h}, \cite[p. 683]{chkv} and, because of the equality $p_{\uhp^n}(x,y)=s_{\uhp^n}(x,y)$ \cite[p. 460]{hkvbook}, the point pair function is a metric on $\uhp^n$. However, the point pair function is not a metric for either the unit ball \cite[Rmk 3.1, p. 689]{chkv} or a two-dimensional sector with central angle $\theta\in(0,\pi)$ \cite[p. 2877]{fss}.


\section{The point pair function in the one-dimensional case}


In this section, we prove that, for every $1$-dimensional domain $G$, the point pair function $p_G$ is either a metric or a quasi-metric with the sharp constant $\sqrt{5}\slash2$, depending on the number of the boundary points of $G$ (Corollary \ref{cor_1dimboundary}).

To do this, we need to establish the following lemma which is also required for the proof of another important result, Theorem~\ref{thm_quasi1disk}.

\begin{lemma}\label{lem_ineqppf}
Let the function $f:[-1,0]\times[0,1]\to\R$ be defined as
\begin{align*}
f(x,y)=\frac{y-x}{\sqrt{(y-x)^2+4(1+x)(1-y)}}
\quad\text{if}\quad
x\neq y,\quad f(x,y)=0\quad\text{otherwise},
\end{align*}
and, the function $g:[0,1]\times[0,1]\to\R$ be defined as
\begin{align*}
g(x,y)=\frac{y-x}{2-x-y}
\quad\text{if}\quad
x\neq y,\quad g(x,y)=0\quad\text{otherwise}.
\end{align*}
Then, for all $-1\le x\le0\le z\le y\le 1$, the inequality
\begin{equation}\label{ine0}
f(x,z)+g(z,y)\ge\frac{2}{\sqrt{5}}\,f(x,y)
\end{equation}
holds. Furthermore, the equality in \eqref{ine0} takes place if and only if $x=-1/3$, $z=0$ and $y=1/3$.
\end{lemma}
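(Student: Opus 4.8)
The plan is to reduce the two-variable inequality \eqref{ine0} to a finite, tractable optimization problem and then analyze it carefully to locate the equality case. First I would observe that both $f$ and $g$ are nonnegative on the relevant ranges, so the inequality is trivial when $f(x,y)=0$ (i.e.\ $x=y$, which forces $x=z=y=0$) and we may assume $x<0<y$ or the boundary cases. It is convenient to fix $x\in[-1,0]$ and $y\in[0,1]$ and regard the left-hand side as a function of $z\in[0,y]$; since $g(z,y)$ is decreasing in $z$ and $f(x,z)$ is increasing in $z$, the minimum of $f(x,z)+g(z,y)$ over $z$ is attained either at an interior critical point or at an endpoint $z=0$ or $z=y$. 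At $z=y$ we get $g(y,y)=0$ and $f(x,y)$, so the inequality reads $f(x,y)\ge \tfrac{2}{\sqrt5}f(x,y)$, which holds strictly; at $z=0$ we get $f(x,0)+g(0,y)$, which will turn out to be the relevant competing case.

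Next I would carry out the interior analysis: setting $\partial_z\bigl(f(x,z)+g(z,y)\bigr)=0$ gives one equation relating $x$, $y$, $z$. After clearing radicals this becomes polynomial, and I expect that combined with the boundary comparison it pins the candidate minimizers down to a one-parameter family; substituting back, the problem becomes: show that
\begin{equation*}
\min_{x\in[-1,0],\,z\in[0,y]}\bigl(f(x,z)+g(z,y)\bigr)\ \ge\ \frac{2}{\sqrt5}\,f(x,y)
\end{equation*}
with equality only at the claimed point. A cleaner route, which I would try first, is to prove the two simpler inequalities $f(x,z)\ge \tfrac{2}{\sqrt5}\,\bigl(\text{something}\bigr)$ and $g(z,y)\ge \tfrac{2}{\sqrt5}\,\bigl(\text{something else}\bigr)$ that add up to $\tfrac{2}{\sqrt5}f(x,y)$ — but since $g$ and $f$ have different denominators a direct split is unlikely to be tight, so I anticipate instead squaring once to remove the square root in $f(x,z)$ and $f(x,y)$, reducing \eqref{ine0} to a polynomial inequality in $x,z,y$ (with the constraint $-1\le x\le 0\le z\le y\le 1$). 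One then homogenizes or normalizes — for instance scaling so that $d_G$-type factors are comparable — and checks the sign of the resulting polynomial, which should factor (with a repeated factor encoding the equality case) in a way that makes nonnegativity visible.

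The main obstacle I expect is precisely this final polynomial-positivity step: after eliminating the radical the expression is a polynomial of moderately high degree in three variables on a box, and showing it is nonnegative with a unique zero requires either a clever factorization (ideally exhibiting a sum-of-squares or a product of manifestly signed factors, with the equality locus cut out by a perfect square vanishing exactly at $(x,z,y)=(-1/3,0,1/3)$) or a disciplined case split over the boundary faces and interior critical points of the box. To confirm the equality case, I would substitute $x=-1/3$, $z=0$, $y=1/3$ directly: $f(-1/3,0)=\tfrac{1/3}{\sqrt{1/9+4\cdot(2/3)}}=\tfrac{1/3}{\sqrt{25/9}}=\tfrac15$, $g(0,1/3)=\tfrac{1/3}{2-1/3}=\tfrac{1/3}{5/3}=\tfrac15$, and $f(-1/3,1/3)=\tfrac{2/3}{\sqrt{4/9+4\cdot(2/3)(2/3)}}=\tfrac{2/3}{\sqrt{4/9+16/9}}=\tfrac{2/3}{\sqrt{20/9}}=\tfrac{2/3}{2\sqrt5/3}=\tfrac{1}{\sqrt5}$, so indeed $\tfrac15+\tfrac15=\tfrac{2}{5}=\tfrac{2}{\sqrt5}\cdot\tfrac{1}{\sqrt5}=\tfrac{2}{\sqrt5}f(-1/3,1/3)$; the uniqueness then follows from strictness everywhere else in the case analysis.
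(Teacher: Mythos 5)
There is a genuine gap: your text is a plan, and the step that carries all the difficulty of the lemma is never executed. Two concrete points. First, your reduction is incomplete. Fixing $x,y$ and minimizing over $z\in[0,y]$ disposes of the endpoint $z=y$, but interior-in-$z$ minimizers are not eliminated by anything you write: the single equation $\partial_z\bigl(f(x,z)+g(z,y)\bigr)=0$ does not by itself produce a contradiction or reduce to a tractable family, and you only say you ``expect'' that clearing radicals will pin the minimizers down. The paper's proof handles this differently and completely: it studies the full three-variable function $F(x,z,y)=f(x,z)+g(z,y)-\tfrac{2}{\sqrt{5}}f(x,y)$ on the open box, shows that the system $\nabla F=0$ forces $z=y$ (by comparing the ratio $A^3/B^3$ obtained from the three equations), hence $F$ has no interior critical points at all, and only then is the problem legitimately reduced to the boundary faces, of which $z=0$ is the only nontrivial one. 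Second, the heart of the lemma is precisely the inequality on that reduced set (your ``polynomial positivity'' step), and you explicitly defer it to ``a clever factorization or a disciplined case split'' without doing either; squaring also does not immediately rationalize the problem, since the cross term $2f(x,z)g(z,y)$ still contains a radical. In the paper this step is done by the substitutions $s=-x/(2+x)$, $t=y/(2-y)$, then $u=s+t$, $v=st$, and $\zeta=\sqrt{v}$ together with the bounds $v/u^2\le 1/4$ and $u\ge 2\sqrt{v}$, which collapse the two-variable problem to the single quartic inequality
\begin{equation*}
h(\zeta)=10\zeta^{4}+16\zeta^{3}+\tfrac{12}{5}\zeta^{2}-\tfrac{16}{5}\zeta+\tfrac{2}{5}\ \ge\ 0,
\end{equation*}
proved via $h(1/5)=h'(1/5)=0$ and convexity; the double root $\zeta=1/5$ is what encodes the unique equality case $x=-1/3$, $z=0$, $y=1/3$.

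Your direct verification that equality holds at $(-1/3,0,1/3)$ is correct, but the uniqueness claim also rests on the missing case analysis, so as it stands neither the inequality nor the ``only if'' part is established. To complete your outline you would need either to prove the nonexistence of interior minimizers (as the paper does with the full gradient system) and then supply an actual proof of the $z=0$ case, or to exhibit the claimed factorization/sum-of-squares certificate for the rationalized inequality — neither of which appears in the proposal.
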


\begin{proof}
I) First we will investigate the function
$$
F(x,z,y)=f(x,z)+g(z,y)-\frac{2}{\sqrt{5}}f(x,y)
$$
in the domain $D=\{(x,z,y)\in\mathbb{R}^3\, :\, -1<x<0<z<y<1\}$.

By differentiation, we obtain
\begin{align*}
\frac{\partial f(x,y)}{\partial
x}&=-\,\frac{2(1-y)(2+x+y)}{(\sqrt{(y-x)^2+4(1+x)(1-y)})^3}\,,\\
\frac{\partial f(x,y)}{\partial
y}&=\frac{2(1+x)(2-x-y)}{(\sqrt{(y-x)^2+4(1+x)(1-y)})^3}\,,\\
\frac{\partial g(x,y)}{\partial x}&=-\,\frac{2(1-y)}{(2-x-y)^2}\,,\quad
\frac{\partial g(x,y)}{\partial y}=\frac{2(1-x)}{(2-x-y)^2}\,.
\end{align*}
Denote
$$
A=\sqrt{(z-x)^2+4(1+x)(1-z)}\,,
\quad B=\sqrt{(y-x)^2+4(1+x)(1-y)}\,.
$$
Then
\begin{align*}
\frac{\partial F(x,z,y)}{\partial x}&=\frac{\partial
f(x,z)}{\partial x}-\frac{2}{\sqrt{5}}\frac{\partial
f(x,y)}{\partial x}
=-\frac{2(1-z)(2+x+z)}{A^3}+\frac{2}{\sqrt{5}}\frac{2(1-y)(2+x+y)}{B^3}\,,\\
\frac{\partial F(x,z,y)}{\partial y}&=\frac{\partial
g(z,y)}{\partial y}-\frac{2}{\sqrt{5}}\frac{\partial
f(x,y)}{\partial
y}=\frac{2(1-z)}{(2-z-y)^2}-\frac{2}{\sqrt{5}}\frac{2(1+x)(2-x-y)}{B^3}\,,\\
\frac{\partial F(x,z,y)}{\partial z}&=\frac{\partial
f(x,z)}{\partial z}+\frac{\partial g(z,y)}{\partial
z}=\frac{2(1+x)(2-x-z)}{A^3}\,-\,\frac{2(1-y)}{(2-z-y)^2}\,.
\end{align*}
At every critical point of $F(x,y,z)$, we have $\nabla
F(x,y,z)=0$ and, therefore,
\begin{align}
-\frac{2(1-z)(2+x+z)}{A^3}\,+\,\frac{2}{\sqrt{5}}\frac{2(1-y)(2+x+y)}{B^3}&=0,\label{eq5}\\
\frac{2(1-z)}{(2-z-y)^2}\,-\,\frac{2}{\sqrt{5}}\,\frac{2(1+x)(2-x-y)}{B^3}
&=0,\nonumber\\
\frac{2(1+x)(2-x-z)}{A^3}-\frac{2(1-y)}{(2-z-y)^2}&=0.\nonumber
\end{align}
From the two latter equalities above, we can deduce that
\begin{align*}
A^3=\frac{(1+x)(2-x-z)(2-z-y)^2}{1-y}\,,\quad
B^3=\frac{2}{\sqrt{5}}\,\frac{(1+x)(2-x-y)(2-z-y)^2}{1-z}\,.
\end{align*}
By combining these expressions of $A$ and $B$ with the equality \eqref{eq5}, we have
\begin{align*}
\frac{A^3}{B^3}=\frac{\sqrt{5}}{2}\frac{(2-x-z)(1-z)}{(2-x-y)(1-y)}=\frac{\sqrt{5}}{2}\frac{(1-z)(2+x+z)}{(1-y)(2+x+y)}
\end{align*}
and, consequently, $x+y=x+z$. This implies that
$y=z$ and we see that $F(x,z,y)$ has no extrema in the
domain $D$.

\medskip

II) Now, let us investigate the case where $(x,z,y)$ is a boundary point of the aforementioned domain $D$. If $x=-1$, $x=0$, $z=y$ or $y=1$, then evidently $F(x,y,z)\ge0$. Thus, we only have to consider the case $z=0$. Without loss of generality, we can assume that $x>-1$ and $y<1$.

Since
\begin{align*}
f(x,0)=\frac{-x}{2+x},\quad g(0,y)=\frac{y}{2-y},
\end{align*}
the inequality \eqref{ine0} in the case $z=0$ is equivalent to the inequality
\begin{align}\label{ine6}
\frac{-x}{2+x}+\frac{y}{2-y}\ge\frac{2}{\sqrt{5}}\frac{y-x}{\sqrt{(y-x)^2+4(1+x)(1-y)}},
\quad -1<x<0<y<1.
\end{align}
By denoting $s=-x\slash(2+x)$ and $t=y\slash(2-y)$ for $0\le s,t\le 1$, we have
\begin{align*}
&x=\frac{-2s}{1+s},\quad
y=\frac{2t}{1+t},\quad
1+x=\frac{1-s}{1+s},\quad
1-y=\frac{1-t}{1+t},\quad
y-x=\frac{2(s+t+2st)}{(1+s)(1+t)},\\
&(y-x)^2+4(1+x)(1-y)=4\frac{(s+t+2st)^2+(1-s^2)(1-t^2)}{(1+s)^2(1+t)^2},
\end{align*}
and \eqref{ine0} is equivalent to
\begin{align*}
s+t\ge\frac{2}{\sqrt{5}}\,\frac{s+t+2st}{\sqrt{(s+t+2st)^2+(1-s^2)(1-t^2)}},\quad 0<s,t<1.
\end{align*}
This can also be written as
\begin{align}\label{ine7}
(s+t+2st)^2+(1-s^2)(1-t^2)\ge
\frac{4}{5}\,\left(\frac{s+t+2st}{s+t}\right)^2.
\end{align}
Now, let $u=s+t$ and $v=st$. Then $0<4v\le u^2<4$, and we can write \eqref{ine7} in the form
\begin{align*}
(u+2v)^2+1-u^2+2v+v^2\ge\frac{4}{5}\,\left(1+\frac{2v}{u}\right)^2.
\end{align*}
After simple transformations, we obtain
\begin{align*}
5v+4u+2+\frac{1}{5v}\,\ge\frac{16}{5}\,\left(\frac{1}{u}+\frac{v}{u^2}\right).
\end{align*}
Since $v\slash u^2\le1\slash4$, we only need to prove that
\begin{align*}
5v+4u+\frac{6}{5}+\frac{1}{5v}\,\ge\frac{16}{5}\frac{1}{u}
\quad\Leftrightarrow\quad
5uv+4u^2+\frac{6}{5}u+\frac{u}{5v}\,\ge\frac{16}{5}.
\end{align*}
Because $u\ge 2\sqrt{v}$, it is sufficient to establish
that
\begin{align*}
10v^{3/2}+16v+\frac{12}{5}v^{1/2}+\frac{2}{5v^{1/2}}\,\ge\frac{16}{5}.
\end{align*}
By denoting $v^{1/2}=\zeta$, we can write the last inequality in the form
\begin{align*}
h(\zeta)\equiv10\zeta^{4}+16\zeta^3+\frac{12}{5}\zeta^2-\frac{16}{5}\,\zeta+\frac{2}{5}\,\ge
0.
\end{align*}
It is easy to see that $h''(\zeta)\ge0$ for $\zeta\ge 0$, therefore $h$ is convex for positive $\zeta$. Consequently,
\begin{align*}
h(\zeta)=10(\zeta-1/5)^2\left((\zeta-1/5)^2+\frac{12}{5}(\zeta-1/5)+\frac{36}{25}\right).
\end{align*}
Since $h(1/5)=h'(1/5)=0$, the convexity of $h$ implies
$h(\zeta)\ge 0$ for $\zeta\ge 0$. Thus, the inequality \eqref{ine6} is proven. Furthermore, from the arguments above it follows that the equality in
\eqref{ine6} holds if and only if $\zeta=1/5$, and in this case $v=1/25$,
$u=2\sqrt{v}=2/5$, $s=t=1/5$ or, equivalently, $x=-1/3$ and $y=1\slash3$.
\end{proof}

\begin{theorem}\label{thm_quasi1disk}
The point pair function $p_G$ is a quasi-metric on the domain $G=(-1,1)\subset\R$ with a sharp constant $\sqrt{5}\slash2$.
\end{theorem}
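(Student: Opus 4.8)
The plan is to deduce the theorem from Lemma~\ref{lem_ineqppf} by a case analysis, since essentially all of the analytic difficulty has already been absorbed into that lemma. Because $p_G$ visibly satisfies positivity and symmetry, it remains to verify the relaxed triangle inequality \eqref{ine_quasi} with $c=\sqrt5/2$ and to confirm that $\sqrt5/2$ cannot be lowered. For $G=(-1,1)$ we have $d_G(t)=1-|t|$, and the identities $(v-u)^2+4(1-u)(1-v)=(2-u-v)^2$ and $(v-u)^2+4(1+u)(1+v)=(2+u+v)^2$ give, for $0\le u\le v<1$, $p_G(u,v)=g(u,v)$; for $-1<u\le v\le 0$, $p_G(u,v)=g(-v,-u)$; and for $-1<u\le 0\le v<1$, $p_G(u,v)=f(u,v)$. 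The reflection $t\mapsto -t$ fixes $G$, $d_G$, hence $p_G$, and I will use it to normalize configurations. Fix $x,y,z\in G$; by symmetry we may assume $x<y$, and $z\in\{x,y\}$ is trivial.

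First I would treat the case $z\notin(x,y)$. The maps $t\mapsto p_G(x,t)$ and $t\mapsto p_G(t,y)$ are monotone on the appropriate intervals — this is read off from the signs of the partial derivatives of $f$ and $g$ computed in the proof of Lemma~\ref{lem_ineqppf} (using $1\pm t\ge0$ on $[-1,1]$), together with continuity at $0$ and the reflection symmetry — so replacing $z$ by whichever of $x,y$ is nearer only decreases $p_G(x,z)+p_G(z,y)$. Hence $p_G(x,z)+p_G(z,y)\ge p_G(x,y)$, and since $\sqrt5/2>1$ this case is finished.

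Next, the case $x<z<y$, split according to signs. If $0\le x$, then $0\le x<z<y$ and $p_G$ restricted to $[0,1)$ equals $g$, which in the variables $a=1-u$, $b=1-v$ is $|a-b|/(a+b)$, i.e.\ the point pair function $p_{\R^+}$, a metric by Section~2; so $g$ obeys the ordinary triangle inequality and $\sqrt5/2>1$ again suffices, and the case $y\le0$ is the same after reflection. Otherwise $x<0<y$. If $0\le z<y$, then $p_G(x,z)=f(x,z)$, $p_G(z,y)=g(z,y)$, $p_G(x,y)=f(x,y)$, and Lemma~\ref{lem_ineqppf} applies directly to give $p_G(x,z)+p_G(z,y)\ge\frac{2}{\sqrt5}\,p_G(x,y)$. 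If $x<z\le0$, then $p_G(x,z)=g(-z,-x)$, $p_G(z,y)=f(-y,-z)$, $p_G(x,y)=f(-y,-x)$, and Lemma~\ref{lem_ineqppf} applied to the triple $(-y,-z,-x)$, which satisfies $-1<-y\le0\le-z\le-x<1$, yields the same bound. This exhausts the cases, so $p_G$ is a quasi-metric on $(-1,1)$ with constant $\sqrt5/2$.

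For sharpness I would invoke the equality clause of Lemma~\ref{lem_ineqppf}: with $x=-1/3$, $z=0$, $y=1/3$ one computes $p_G(-1/3,1/3)=1/\sqrt5$ and $p_G(-1/3,0)=p_G(0,1/3)=1/5$, so $p_G(x,y)=\frac{\sqrt5}{2}\bigl(p_G(x,z)+p_G(z,y)\bigr)$, whence no smaller constant is admissible. The only genuinely delicate point in this argument is keeping track of the sign cases for $z\in(x,y)$ and choosing the reflection that lands the configuration inside the hypotheses of Lemma~\ref{lem_ineqppf}; the monotonicity claim used in the case $z\notin(x,y)$ should be written out but is routine given the derivative formulas already on hand.
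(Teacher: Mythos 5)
Your proposal is correct and follows essentially the same route as the paper: after disposing of the same-sign and $z\notin(x,y)$ cases by elementary monotonicity/metric considerations, you reduce the remaining mixed-sign case (via the reflection $t\mapsto-t$) to Lemma~\ref{lem_ineqppf} and obtain sharpness from its equality case $x=-1/3$, $z=0$, $y=1/3$. The only difference is that you spell out explicitly the routine cases the paper dispatches in one or two sentences, which does no harm.
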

\begin{proof}
We need to show that the function
\begin{equation}\label{alpha4}
p_G(x,y)=\frac{|x-y|}{\sqrt{|x-y|^2+4(1-|x|)(1-|y|)}}
\end{equation}
satisfies the inequality \eqref{ine_quasi} for all points $x,y,z\in(-1,1)$ with the constant $c=\sqrt{5}\slash2$. If $x$, $y$ and $z$ are all either non-negative or non-positive, then $p_G(x,y)\le p_G(x,z)+p_G(z,y)$ trivially. In the opposite case, either one of the points is negative and other two points are non-negative, or we have one positive and two non-positive points. Because of symmetry, we can just consider the
first possibility. If $z$ is negative, then the inequality
$p_G(x,y)\le p_G(x,z)+p_G(z,y)$ holds for all $x,y\in[0,1)$. Consequently, we can assume that $x<0\le z\le y$. In this case, our inequality can be simplified to
\begin{align*}
\frac{y-x}{\sqrt{(y-x)^2+4(1+x)(1-y)}}
\,\leq\,\frac{\sqrt{5}}{2}\,\left(
\frac{x-z}{\sqrt{(x-z)^2+4(1+x)(1-z)}}
+\frac{z-y}{2-z-y}\right).
\end{align*}
The inequality above follows from 
Lemma~\ref{lem_ineqppf}. Since,  in the case $x=-1\slash3$, $z=0$ and $y=1\slash3$,  the equality holds we see that the constant $\sqrt{5}\slash2$ is the best possible.
\end{proof}

We note that, for any $1$-dimensional domain $G\subsetneq\R$, the boundary $\partial G$ consists of either one or two points. Using this fact, we formulate:

\begin{corollary}\label{cor_1dimboundary}
If $G\subsetneq\R$ is a $1$-dimensional domain, then the point pair function $p_G$ is a metric if $\mbox{\rm card}(\partial G)=1$ and a quasi-metric if $\mbox{\rm card}(\partial G)=2$. Moreover, in the second case the best possible constant $c$ in the inequality $p_G(x,y)\le c(p_G(x,z)+p_G(z,y))$, $x,y,z\in G$,  equals $\sqrt{5}\slash2$.
\end{corollary}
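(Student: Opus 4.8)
The plan is to classify the one-dimensional domains, reduce to two model cases using the affine invariance of $p_G$, invoke Theorem~\ref{thm_quasi1disk} in the two-point case, and settle the one-point case by a short direct estimate. First I would record that a domain $G\subsetneq\R$ is a connected open proper subset of $\R$, hence an open interval: either a bounded interval $(a,b)$ with $a<b$ both finite, in which case $\mathrm{card}(\partial G)=2$, or a half-line, in which case $\mathrm{card}(\partial G)=1$. Since an affine map $\varphi(x)=\lambda x+c$ with $\lambda\neq 0$ multiplies $|x-y|$, $d_G(x)$ and $d_G(y)$ all by $|\lambda|$, one has $p_{\varphi(G)}(\varphi(x),\varphi(y))=p_G(x,y)$; thus $p_G$ is unaffected by rescaling, translation and reflection of $\R$, and it suffices to treat the two representatives $G=(-1,1)$ and $G=(0,\infty)$.

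For $G=(-1,1)$ the statement, including the value $\sqrt5/2$ of the best constant, is exactly Theorem~\ref{thm_quasi1disk}, and by the affine invariance just mentioned the same sharp constant $\sqrt5/2$ holds on every bounded interval. Since $\sqrt5/2>1$, the ordinary triangle inequality fails, so $p_G$ fails to be a metric whenever $\mathrm{card}(\partial G)=2$; this disposes of the two-point case.

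For $G=(0,\infty)$ we have $d_G(x)=x$, so $|x-y|^2+4\,d_G(x)d_G(y)=(x+y)^2$ and
\[
p_G(x,y)=\frac{|x-y|}{x+y}\,,\qquad x,y>0.
\]
Positivity and symmetry are evident, so only the triangle inequality $p_G(x,y)\le p_G(x,z)+p_G(z,y)$ for $x,y,z>0$ remains, and by symmetry I may assume $x\le y$. If $z\le x$ or $z\ge y$, then the monotonicity of $t\mapsto(y-t)/(y+t)$ (decreasing) or of $t\mapsto(t-x)/(t+x)$ (increasing) gives $p_G(x,y)\le\max\{p_G(x,z),p_G(z,y)\}$, which suffices. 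In the remaining case $x\le z\le y$, set $a=(z-x)/(z+x)\ge 0$ and $b=(y-z)/(y+z)\ge 0$; clearing denominators one verifies the identity $(y-x)/(x+y)=(a+b)/(1+ab)$ (which is the addition formula for $\th$ in disguise, via the substitution $x=e^{2u}$), so that $p_G(x,y)=(a+b)/(1+ab)\le a+b=p_G(x,z)+p_G(z,y)$, and $p_G$ is a metric on $(0,\infty)$. I expect no real obstacle here: the content of the two-point case is carried entirely by Theorem~\ref{thm_quasi1disk}, and the one-point case collapses, after the elementary monotonicity observations, to the single algebraic identity above.
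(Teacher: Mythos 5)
Your proof is correct, and its overall structure matches the paper's: exploit the affine invariance of $p_G$ to reduce to the two model domains $(-1,1)$ and $\R^+$, and quote Theorem~\ref{thm_quasi1disk} verbatim for the bounded-interval case, including the sharpness of $\sqrt5/2$. The only place you genuinely diverge is the half-line case. The paper disposes of it in one line by observing that $p_{\R^+}(x,y)=|x-y|/(x+y)$ coincides with the triangular ratio metric $s_{\R^+}(x,y)$, which is already known to be a metric (H\"ast\"o's result, cited in Section~2). You instead prove the triangle inequality on $\R^+$ from scratch: the monotonicity observations handle $z\notin[x,y]$, and for $x\le z\le y$ your identity
\[
\frac{y-x}{y+x}=\frac{a+b}{1+ab},\qquad a=\frac{z-x}{z+x},\ b=\frac{y-z}{y+z},
\]
(the $\th$ addition formula) immediately gives $p_{\R^+}(x,y)\le a+b$; I checked the algebra and it is right. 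So your argument buys self-containedness -- no appeal to the triangular ratio metric literature -- at the cost of half a page, while the paper's version is shorter by leaning on a known theorem. Your added remark that $\sqrt5/2>1$ forces failure of the ordinary triangle inequality on bounded intervals is also sound (sharpness of a constant strictly larger than $1$ means some triple violates the unrelaxed inequality), though the corollary as stated only asks for the quasi-metric claim.
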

\begin{proof}
First, we note that the point pair function is invariant under translation and stretching by a nonzero factor.

If card$(\partial G)=1$, then for some $x_0$ we have $\partial G=\{x_0\}$ and with the help of the function $f:x\mapsto a(x-x_0)$, $a\neq 0$,  we can map the domain $G$ onto the positive real axis $\R^+$. The function $f$ preserves the point pair function, i.e. $p_{\R^+}(f(x),f(y))=p_G(x,y)$ for all $x$, $y\in G$. Therefore,  from the very beginning we can assume that $G=\R^+$. Since $p_{\R^+}(x,y)=|x-y|\slash(x+y)$ coincides with the triangular ratio metric $s_{\R^+}(x,y)$ for all $x$, $y\in\R^+$, we conclude that in this case the point pair function is a metric.

If card$(\partial G)=2$, then we have $G=(x_0,x_1)$ for some $x_0$, $x_1\in\R$. Now, we apply the function $f:x\mapsto (2x-(x_0+x_1))/(x_1-x_0)$ which maps $G$ onto the interval $(-1,1)$. We see that, as above, $f$ preserves the point pair function, therefore we can assume that $G=(-1,1)$, and the result follows from Theorem~\ref{thm_quasi1disk}.
\end{proof}


\section{The point pair function in the $n$-dimensional case}


In this section, we investigate the quasi-metric property of the point pair function 
by considering its behaviour in $n$-dimensional domains, $n\geq1$. 
Our main results are
Theorems \ref{thm_pInR0} and \ref{thm_ppfquasiconstant}. First, we will establish 
Lemma~\ref{lem_trigABC}, which has quite complicated and technical inequalities but is necessary for the proof of Theorem \ref{thm_pInR0}. We note that some results close in spirit to those described in Lemmas~\ref{lem_trigABC} and \ref{style41} below are established in \cite{m}.

\begin{lemma}\label{lem_trigABC}
\textit{Let} $A=\sh (x+y)$,  $a\le\sh (u+v)$, $B=\sh x$, $C=\sh y$, $b=\sh u$, $c=\sh v$, and $x$, $y$, $u$, $v\ge 0$.  \textit{Then}
\begin{equation}\label{arsh}
\arsh\sqrt{A^2+a^2}
\le \arsh\sqrt{B^2+b^2}+\arsh\sqrt{C^2+c^2}
\end{equation}
\textit{or, what is equivalent, }
$$
\sqrt{A^2+a^2}
\le \sqrt{B^2+b^2}\sqrt{1+C^2+c^2}+ \sqrt{C^2+c^2}\sqrt{1+B^2+b^2}.
$$
Moreover,
\begin{equation}\label{aabbcc}
{\sqrt\frac{A^2+a^2}{1+A^2+a^2}}\,\le\, \sqrt {\frac{B^2+b^2}{1+B^2+b^2}}\,+\,\sqrt\frac{C^2+c^2}{1+C^2+c^2}\,.
\end{equation}

\end{lemma}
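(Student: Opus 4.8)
The plan is to establish \eqref{arsh} first, in the equivalent form displayed in the statement, and then to deduce \eqref{aabbcc} from it by a short argument with hyperbolic tangents. For the first step I would set $\beta=\arsh\sqrt{B^{2}+b^{2}}$ and $\gamma=\arsh\sqrt{C^{2}+c^{2}}$, so that $\sh\beta=\sqrt{B^{2}+b^{2}}$, $\ch\beta=\sqrt{1+B^{2}+b^{2}}$, and similarly for $\gamma$; the addition formula then gives $\sh(\beta+\gamma)=\sqrt{B^{2}+b^{2}}\,\sqrt{1+C^{2}+c^{2}}+\sqrt{C^{2}+c^{2}}\,\sqrt{1+B^{2}+b^{2}}$, and since $\arsh$ is increasing, \eqref{arsh} is precisely the inequality $\sqrt{A^{2}+a^{2}}\le\sh(\beta+\gamma)$, i.e.\ the displayed equivalent form. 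Because the left-hand side is increasing in $a\ge0$ while the right-hand side does not involve $a$, it suffices to treat the extremal case $a=\sh(u+v)$. Then, using $A=\sh(x+y)=B\sqrt{1+C^{2}}+C\sqrt{1+B^{2}}$ and $a=\sh(u+v)=b\sqrt{1+c^{2}}+c\sqrt{1+b^{2}}$, I would square the two nonnegative sides and cancel the common terms $B^{2}+C^{2}+b^{2}+c^{2}+2B^{2}C^{2}+2b^{2}c^{2}$; what remains after simplification is
\[
BC\sqrt{(1+B^{2})(1+C^{2})}+bc\sqrt{(1+b^{2})(1+c^{2})}\le B^{2}c^{2}+b^{2}C^{2}+\sqrt{(B^{2}+b^{2})(C^{2}+c^{2})(1+B^{2}+b^{2})(1+C^{2}+c^{2})}.
\]

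This last inequality is the crux, and getting the algebra after squaring into this shape together with the verification below is the step I expect to cost the most effort. To prove it I would exploit that, since $B,C,b,c\ge0$, we have $BC\sqrt{(1+B^{2})(1+C^{2})}=\sqrt{B^{2}(1+B^{2})}\,\sqrt{C^{2}(1+C^{2})}$ and likewise for the $b,c$ term, so that by the Cauchy--Schwarz inequality the left-hand side is at most $\sqrt{\bigl(B^{2}(1+B^{2})+b^{2}(1+b^{2})\bigr)\bigl(C^{2}(1+C^{2})+c^{2}(1+c^{2})\bigr)}$. The identities $B^{2}(1+B^{2})+b^{2}(1+b^{2})=(B^{2}+b^{2})(1+B^{2}+b^{2})-2B^{2}b^{2}$ and its analogue in $C,c$ then bound this in turn by $\sqrt{(B^{2}+b^{2})(C^{2}+c^{2})(1+B^{2}+b^{2})(1+C^{2}+c^{2})}$, which is no larger than the right-hand side because $B^{2}c^{2}+b^{2}C^{2}\ge0$. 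This would close the proof of \eqref{arsh}.

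Finally I would deduce \eqref{aabbcc}. For $t\ge0$ and $s=\arsh\sqrt{t}$ one has $\sqrt{t/(1+t)}=\th s$, so with $\alpha=\arsh\sqrt{A^{2}+a^{2}}$ and $\beta,\gamma$ as above, \eqref{aabbcc} is exactly $\th\alpha\le\th\beta+\th\gamma$. By \eqref{arsh} we already have $\alpha\le\beta+\gamma$, and for $\beta,\gamma\ge0$ the addition formula gives $\th(\beta+\gamma)=(\th\beta+\th\gamma)/(1+\th\beta\,\th\gamma)\le\th\beta+\th\gamma$; since $\th$ is increasing, $\th\alpha\le\th(\beta+\gamma)\le\th\beta+\th\gamma$, which is \eqref{aabbcc}. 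Thus the whole lemma comes down to one Cauchy--Schwarz estimate together with elementary manipulations of $\sh$, $\ch$ and $\th$, and the only place where real care is needed is the squaring-and-cancelling that produces the displayed key inequality.
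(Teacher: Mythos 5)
Your proposal is correct and takes essentially the same route as the paper: reduce to the extremal case $a=\sh(u+v)$, square the equivalent $\sh$-form of \eqref{arsh}, settle what remains by a two-term Cauchy--Schwarz/AM--GM estimate after discarding nonnegative terms, and then obtain \eqref{aabbcc} from monotonicity and subadditivity of $\th$. The only organizational difference is that you square once and apply Cauchy--Schwarz with the grouping $\bigl(B\sqrt{1+B^2},\,b\sqrt{1+b^2}\bigr)$ versus $\bigl(C\sqrt{1+C^2},\,c\sqrt{1+c^2}\bigr)$, while the paper squares twice and uses the grouping dictated by the addition formulas before invoking AM--GM, but the underlying estimate is the same.
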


\begin{proof}
It is sufficient to prove that
\begin{multline}\label{shsqrt}
\sqrt{\sh^2(x+y)+\sh^2(u+v)}\le
\sqrt{\sh^2x+\sh^2u}\,\,\sqrt{1+\sh^2y
+\sh^2v}\,\,\\+\sqrt{\sh^2y+\sh^2v}\,\,\sqrt{1+\sh^2x+\sh^2u}.
\end{multline}
By squaring 
both sides of \eqref{shsqrt}, we have
\begin{multline*}
\sh^2(x+y)+\sh^2(u+v)\\ \le
(\sh^2x+\sh^2u)(1+\sh^2y+\sh^2v)+(\sh^2y+\sh^2v)(1+\sh^2x+\sh^2u)\\
+2\sqrt{\sh^2x+\sh^2u}\,\,\sqrt{1+\sh^2y+\sh^2v}\,\,\sqrt{\sh^2y+\sh^2v}\,\,\sqrt{1+\sh^2x+\sh^2u}
\end{multline*}
or
\begin{multline*}
(\sh x \,\ch y +\sh y \,\ch x)^2+(\sh u\,\ch v+\sh v\,\ch u)^2\\
\le\sh^2x(\ch^2y+\sh^2v)+\sh^2u(\ch^2v+\sh^2y)
+\sh^2y(\ch^2x+\sh^2u)+\sh^2 v(\ch^2u+\sh^2x)\\
+2\sqrt{\sh^2x+\sh^2u}\,\,\sqrt{1+\sh^2y+\sh^2v}\,\,\sqrt{\sh^2y+\sh^2v}\,\,\sqrt{1+\sh^2x+\sh^2u}.
\end{multline*}
After simple transformations, we obtain
\begin{multline*}
2\sh x \,\ch y\,\sh y \,\ch x+2\sh u
\,\ch v\,\sh v\,\ch u\\
\le\sh^2x\,\sh^2v+\sh^2u\,\sh^2y+\sh^2y\,\sh^2u+\sh^2 v+\sh^2x\\
+2\sqrt{\sh^2x+\sh^2u}\,\,\sqrt{1+\sh^2y+\sh^2v}\,\,\sqrt{\sh^2y+\sh^2v}\,\,\sqrt{1+\sh^2x+\sh^2u}.
\end{multline*}
To establish the inequality above, it is sufficient to prove that
\begin{multline*}
\sh x \,\ch y\,\sh y \,\ch x+\sh u
\,\ch v\,\sh v\,\ch u\\
\le\sqrt{\sh^2x+\sh^2u}\,\,\sqrt{1+\sh^2y+\sh^2v}\,\,\sqrt{\sh^2y+\sh^2v}\,\,\sqrt{1+\sh^2x+\sh^2u}.
\end{multline*}
Squaring this inequality, we have
\begin{align*}
&\sh^2 x  \,\ch^2 y \,\sh^2 y  \,\ch^2 x+\sh^2 u
 \,\ch^2 v \,\sh^2 v \,\ch^2 u+2  \,\sh x \,\ch y \,\sh y  \,\ch x \,\sh u
 \,\ch v \,\sh v \,\ch u\\
&\le(\sh^2x+\sh^2u)(1+\sh^2y+\sh^2v)(\sh^2y+\sh^2v)(1+\sh^2x+\sh^2u)
\end{align*}
or
\begin{multline}\label{mml}
\sh^2 x\,\ch^2 y\,\sh^2 y \,\ch^2 x+\sh^2 u
\,\ch^2 v\,\sh^2 v\,\ch^2 u+2 \,\sh x \,\ch y\,\sh y \,\ch x\,\sh u
\,\ch v\,\sh v\,\ch u
\\
\le
[\sh^2x(1+\sh^2y+\sh^2v)+\sh^2u(1+\sh^2y+\sh^2v)]\\ \times[\sh^2y(1+\sh^2x+\sh^2u)+\sh^2v(1+\sh^2x+\sh^2u)]
\\
=[\sh^2x\,\ch^2y+\sh^2x\,\sh^2v+\sh^2u\,\ch^2v+\sh^2u\,\sh^2y]\\ \times[\sh^2y\,\ch^2x+\sh^2y\,\sh^2u+\sh^2v\,\ch^2u+\sh^2v\,\sh^2x].
\end{multline}
By the inequality of arithmetic and geometric means, we have
$$
2 \sh x  \,\ch y \,\sh y  \,\ch x \,\sh u
\ch v \,\sh v \,\ch u\le \sh^2x \,\ch^2y \,\sh^2v \,\ch^2u+\sh^2u \,\ch^2v \,\sh^2y \,\ch^2x,
$$
therefore,
\begin{multline*}
\sh^2 x  \,\ch^2 y \,\sh^2 y  \,\ch^2 x+\sh^2 u
 \,\ch^2 v \,\sh^2 v \,\ch^2 u+2 \sh x  \,\ch y \,\sh y  \,\ch x \,\sh u
 \,\ch v \,\sh v \,\ch u\\
\le
[\sh^2x\ch^2y+\sh^2u\ch^2v][\sh^2y\ch^2x+\sh^2v\ch^2u].
\end{multline*}
This inequality implies \eqref{mml}, therefore, \eqref{arsh} is proved. The inequality \eqref{aabbcc} can be obtained by applying the function $\th$ to both 
sides of \eqref{arsh}.
\end{proof}

\begin{theorem}\label{thm_pInR0}
For $n\geq1$, the point pair function is a metric on $G=\R^n\backslash\{0\}$.
\end{theorem}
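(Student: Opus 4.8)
The plan is to exhibit $p_G$ on $G=\R^n\setminus\{0\}$ as $\th$ applied to an honest metric and then read off the triangle inequality from Lemma~\ref{lem_trigABC}. Here $d_G(x)=|x|$, so the positivity and symmetry of $p_G$ are immediate and only the triangle inequality is at stake. Write $\omega_x:=x/|x|$ and let $\theta=\theta(x,y)\in[0,\pi]$ be the angle at the origin between $x$ and $y$. The first step is the elementary identity
\begin{align*}
\left(\frac{|x-y|}{2\sqrt{|x|\,|y|}}\right)^{2}
&=\frac{|x|^{2}+|y|^{2}-2|x||y|\cos\theta}{4|x||y|}
=\frac{\ch\!\bigl(\log\tfrac{|x|}{|y|}\bigr)-\cos\theta}{2}\\
&=\sh^{2}\!\bigl(\tfrac12\log\tfrac{|x|}{|y|}\bigr)+\sin^{2}\tfrac{\theta}{2},
\end{align*}
which, with the notation $u(x,y):=\tfrac12\bigl|\log|x|-\log|y|\bigr|$, $V(x,y):=\sin\tfrac{\theta}{2}=\tfrac12|\omega_x-\omega_y|$, together with the identity $\th\arsh t=t/\sqrt{1+t^{2}}$, yields the representation
\begin{align*}
p_G(x,y)=\th\arsh\sqrt{\sh^{2}u(x,y)+V(x,y)^{2}}\,.
\end{align*}

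Next I would record two elementary triangle inequalities valid for all $x,y,z\in G$: namely $u(x,y)\le u(x,z)+u(z,y)$, which follows since $(r_1,r_2)\mapsto|\log r_1-\log r_2|$ satisfies the triangle inequality on $(0,\infty)$; and $V(x,y)\le V(x,z)+V(z,y)$, which follows since $2V(x,y)=|\omega_x-\omega_y|$ is a Euclidean distance between unit vectors. Then, for fixed $x,y,z\in G$, I would apply Lemma~\ref{lem_trigABC} with its (nonnegative) arguments taken to be $u(x,z),\,u(z,y),\,\arsh V(x,z),\,\arsh V(z,y)$, so that in the notation of that lemma
\begin{align*}
&A=\sh\bigl(u(x,z)+u(z,y)\bigr),\quad B=\sh u(x,z),\quad C=\sh u(z,y),\\
&b=V(x,z),\quad c=V(z,y),\quad a:=V(x,y).
\end{align*}
The hypothesis $a\le\sh(\cdot)$ of the lemma then reads $V(x,y)\le V(x,z)\sqrt{1+V(z,y)^{2}}+V(z,y)\sqrt{1+V(x,z)^{2}}$, which holds because the right-hand side is at least $V(x,z)+V(z,y)\ge V(x,y)$; this is precisely the place where the relaxed (inequality) form of Lemma~\ref{lem_trigABC} is needed.

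With these choices $B^{2}+b^{2}=\sh^{2}u(x,z)+V(x,z)^{2}$ and $C^{2}+c^{2}=\sh^{2}u(z,y)+V(z,y)^{2}$, so the right-hand side of \eqref{aabbcc} equals exactly $p_G(x,z)+p_G(z,y)$. On the other hand $u(x,z)+u(z,y)\ge u(x,y)$ together with the monotonicity of $\sh$ gives $A^{2}+a^{2}\ge\sh^{2}u(x,y)+V(x,y)^{2}$, and since $\th\arsh$ is increasing, the left-hand side of \eqref{aabbcc} is at least $p_G(x,y)$. Combining the two bounds gives $p_G(x,y)\le p_G(x,z)+p_G(z,y)$, which is the triangle inequality, and the theorem follows (the argument is uniform in the dimension $n\ge1$, where for $n=1$ the angle $\theta$ is simply $0$ or $\pi$). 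I expect the only points requiring genuine care to be the trigonometric identity turning $\arsh$ of the point pair function into the ``Pythagorean'' hyperbolic expression $\sqrt{\sh^{2}u+V^{2}}$, and the observation that the angular coordinate must enter the lemma through the bound $a\le\sh(\cdot)$ rather than as an exact hyperbolic sine; the substantive difficulty of the approach is already absorbed into Lemma~\ref{lem_trigABC}.
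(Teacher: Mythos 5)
Your proposal is correct, and at its core it travels the same road as the paper: the heavy lifting is done by Lemma~\ref{lem_trigABC}, applied after splitting $|x-y|^2/(4|x|\,|y|)$ into a radial part $\sh^2\bigl(\tfrac12\bigl|\log(|x|/|y|)\bigr|\bigr)$ and an angular part $\sin^2(\theta/2)$ — these are exactly the quantities $A,B,C$ of \eqref{ABCdef} and $a,b,c$ in the paper's proof. The difference is in how the reduction is organized, and yours is cleaner. The paper normalizes $|z|=1$ and then splits into cases: when $|x|,|y|$ lie on the same side of $1$ it runs a separate direct argument (the chain $\sh p\le\sh q+\sh s\Rightarrow p\le q+s\Rightarrow\th p\le\th q+\th s$), and only in the interlacing case $|y|\le1\le|x|$, where the radial parts add exactly so that $A=\sh(\arsh B+\arsh C)$, does it invoke the lemma; moreover, for $n\ge3$ it needs an extra geometric step (the trihedral-angle inequality $2\gamma\le2\alpha+2\beta$ followed by $\sin\gamma\le\sin\alpha+\sin\beta$). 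You avoid both: by applying the lemma with $A=\sh\bigl(u(x,z)+u(z,y)\bigr)$ and then using the triangle inequality $u(x,y)\le u(x,z)+u(z,y)$ together with the monotonicity of $t\mapsto\sqrt{t/(1+t)}$, you dispense with the case analysis, and by feeding the angular data in through the chordal bound $V(x,y)\le V(x,z)+V(z,y)\le\sh\bigl(\arsh V(x,z)+\arsh V(z,y)\bigr)$ — a Euclidean triangle inequality between unit vectors, valid in every dimension — you make the argument uniform in $n\ge1$ with no separate spherical-geometry step. The identities you flag (the $\th\arsh$ representation of $p_G$ and the hypothesis $a\le\sh(u+v)$ entering only as an inequality) check out, so the proof is complete as written; the only thing worth making explicit is that the lemma is being used with $a=V(x,y)\ge0$, so that the relaxed hypothesis indeed controls $a^2$ on the left-hand side of \eqref{aabbcc}.
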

\begin{proof}
Because the point pair function trivially satisfies the properties (1) and (2) of a metric, we  only need to prove the triangle inequality. Therefore, we will show that $p_G(x,y)\leq p_G(x,z)+p_G(z,y)$ for $x$, $y$, $z\in G=\R^n\backslash\{0\}$. Note that, for all points $x$, $y$ in this domain,
\begin{align*}
p_G(x,y)\,=\,\frac{|x-y|}{\sqrt{|x-y|^2+4|x|\,|y|}}\,.
\end{align*}


1) First we consider the case $n=2$. Then we can identify points of $\R^2$ with complex numbers.

Because of homogeneity of $p_G(x,y)$, we can assume that $z=1$, so that the triangle inequality becomes
\begin{equation}\label{Rr1a}
\frac{|x-y|}{\sqrt{|x-y|^2+4|x|\,|y|}}\,\le\,
\frac{|x-1|}{\sqrt{|x-1|^2+4|x|}}\,+\,\frac{|1-y|}{\sqrt{|1-y|^2+4|y|}}\,.
\end{equation}
Let $x=Re^{i2\phi}$, $y=re^{i2\psi}$, $R$, $r>0$, $\phi$, $\psi\in \R$. We can assume that $R\ge r$.

First we will show that if either $0<r\le R\le 1 $ or $1\le r\le R$, then \eqref{Rr1a} holds.
Let us fix some $u$ and $v$ such that $x=u^2$, $y=v^2$. Then  \eqref{Rr1a} is equivalent to the inequality
\begin{equation}\label{Rr2a}
\frac{|u/v-v/u|}{\sqrt{|u/v-v/u|^2+4}}\,\le\,
\frac{|u-1/u|}{\sqrt{|u-1/u|^2+4}}\,+\,\frac{|v-1/v|}{\sqrt{|v-1/v|^2+4}}\,.
\end{equation}

If $0<r\le R\le 1 $, then $|u|$, $|v|\le1$ and
\begin{multline}\label{uv1}
|u/v-v/u|\le |u/v-uv|+|uv-v/u|\\ =|u|\,|v-1/v|+|v|\,|u-1/u| \le
|u-1/u|+|v-1/v|.
\end{multline}
Therefore, if we put
$$
p=\arsh\frac{|u/v-v/u|}{2}\,, \quad q=\arsh \frac{|u-1/u|}{2}\,, \quad  s=\arsh \frac{|v-1/v|}{2}\,,
$$
then, by \eqref{uv1}, we have $\sh p\le\sh q+\sh s$. But this immediately implies $p\le q+ s$ and $\th p\le \th q+ \th s$ what is equivalent to  \eqref{Rr1a}.

Since the inequality  \eqref{Rr2a} does not change after replacing $u$ and  $v$ with
$u^{-1}$ and $v^{-1}$, we see that for the case $1\le r\le R$ the inequality \eqref{uv1} is also valid.

Thus, we only need to consider the case $r\le 1\le R$.
We have
\begin{align*}
|x-y|&=\sqrt{R^2+r^2-2Rr\cos[2(\phi-\psi)]},\quad
|x-1|=\sqrt{R^2+1-2R\cos2\phi},\\
|1-y|&=\sqrt{r^2+1-2r\cos2\psi},
\end{align*}
consequently, the inequality \eqref{Rr1a} can be written in the form
\begin{multline*}
\sqrt{\frac{R^2+r^2-2Rr\cos[2(\phi-\psi)]}{R^2+r^2+2Rr(2-\cos[2(\phi-\psi)])}}\, \\ \le\,
\sqrt{\frac{R^2+1-2R\cos2\phi}{R^2+1+2R(2-\cos2\phi)}}\,+\,
\sqrt{\frac{r^2+1-2Rr\cos2\psi}{r^2+1+2r(2-\cos2\psi)}}\,.
\end{multline*}
This can be simplified to
\begin{multline}\label{Rr1}
\sqrt{\frac{(R-r)^2+4Rr\sin^2(\phi-\psi)}{(R+r)^2+4Rr\sin^2(\phi-\psi)}} \\
\,\le\,\sqrt{\frac{(R-1)^2+4R\sin^2\phi}{(R+1)^2+4R\sin^2 \phi}}\,+\,
\sqrt{\frac{(1-r)^2+4r\sin^2\psi}{(r+1)^2+4r\sin^2(\psi)}}\,.
\end{multline}

If we denote
\begin{equation}\label{ABCdef}
A=\frac{|R-r|}{2\sqrt{Rr}}\,, \quad B=\frac{|R-1|}{2\sqrt{R}}\,, \quad
C=\frac{|1-r|}{2\sqrt{r}}\,,
\end{equation}
then the inequality \eqref{Rr1} takes the form
\begin{equation}\label{ABC}
\sqrt{\frac{A^2+\sin^2(\phi-\psi)}{1+A^2+\sin^2(\phi-\psi)}}\,\le\,
\sqrt{\frac{B^2+\sin^2\phi}{1+B^2+\sin^2\phi}}\,+\,\sqrt{\frac{C^2+\sin^2\psi}{1+C^2+\sin^2\psi}}\,.
\end{equation}
Let $a=|\sin(\phi-\psi)|$, $b=|\sin\phi|$, $c=|\sin\psi|$, $u=\arsh b$ and $v=\arsh c$. Then $a\le \sh(u+v)$, since
\begin{align*}
a&=|\sin\phi\cos\psi-\sin\psi\cos\phi|
\le|\sin\phi\cos\psi|+|\sin\psi\cos\phi|
=b\sqrt{1-c^2}+c\sqrt{1-b^2}\\
&\le b\sqrt{1+c^2}+c\sqrt{1+b^2}.
\end{align*}

If $A$, $B$ and $C$ are as in \eqref{ABCdef}, then we have  $A=\sh(\arsh B+\arsh C)$. By applying the function $\th$ to the inequality \eqref{arsh} of Lemma \ref{lem_trigABC} and combining this with  the inequality
$\th(u+v)\le \th u+\th v$ for $u,v\ge 0$, we obtain~\eqref{ABC}.
\medskip

2) Now we consider the case $n\neq 2$. If $n=1$, then the statement of the theorem immediately follows from the case 1). Therefore, we will assume that $n\ge 3$. Consider the subspace $E$ of $\R^n$ containing the points $0$, $x$, $y$ and $z$. Since the Euclidean distance and the function $p_G$ are invariant under orthogonal transformations of $\R^n$ and for $n\ge 2$ the triangle inequality is valid, we can assume that $E$ coincides with $\R^3$.

Without loss of generality we can put $|z|=1$.  Now consider the vectors $Ox$, $Oy$, and $Oz$ from the origin to the points $x$, $y$, and $z$, respectively. Let $2\alpha$ be the angle between  $Ox$ and $Oz$, $2\beta$ be the angle between $Oz$ and $Oy$, and $2\gamma$  be the angle between  $Ox$ and $Oy$; $\alpha$, $\beta$, $\gamma\in[0,\pi/2)$. Then, by  the law of cosines,
\begin{align*}
|x-y|&=\sqrt{R^2+r^2-2Rr\cos2\gamma},\quad
|x-z|=\sqrt{R^2+1-2R\cos2\alpha},\\
|z-y|&=\sqrt{r^2+1-2r\cos2\beta},
\end{align*}
where $R=|x|$ and $r=|y|$.
Applying the same arguments as above in the case $n=2$, we see that we only  need to prove the inequality
\begin{equation}\label{ABC1}
\sqrt{\frac{A^2+\sin^2\gamma}{1+A^2+\sin^2\gamma}}\,\le\,
\sqrt{\frac{B^2+\sin^2\alpha}{1+B^2+\sin^2\alpha}}\,+\,\sqrt{\frac{C^2+\sin^2\beta}{1+C^2+\sin^2\beta}}\,,
\end{equation}
where $A$, $B$ and $C$  is defined by  \eqref{ABCdef}.

Denote $a=\sin\gamma$, $b=\sin\alpha$, $c=\sin\beta$. Consider the triangular angle, formed by the vectors $Ox$, $Oy$ and $Oz$. It has plane angles equal  $2\alpha$, $2\beta$ and $2\gamma$. Since each plane angle of a triangular angle is less than the sum of its other two plane angles, we obtain $2\gamma\le 2\alpha+2\beta$, therefore, $\gamma\le \alpha+\beta$.

Now, we will show that this implies the inequality $\sin\gamma\le \sin \alpha+\sin\beta$. Actually, if $\alpha+\beta\le \frac{\pi}{2}$, then $\sin \gamma\le \sin(\alpha+\beta)\le \sin \alpha+\sin\beta$. If $\alpha+\beta> \frac{\pi}{2}$, then $\beta> \frac{\pi}{2}-\alpha$ and $\sin \alpha+\sin\beta\ge \sin \alpha+\sin(\frac{\pi}{2}-\alpha)=\sin \alpha+\cos \alpha=\sqrt{2}\sin(\alpha+\frac{\pi}{4})\ge1\ge \sin \gamma$, since $\frac{\pi}{4}\le\alpha+\frac{\pi}{4}\le \frac{3\pi}{4}$.

Thus, we have $a\le b+c$ and this implies that $a\le b\sqrt{1+c^2}+c\sqrt{1+b^2}$, and we can continue the proof just as in the Case~1) to show that \eqref{ABC1} is valid.
\end{proof}

\begin{theorem}\label{thm_ppfquasiconstant}
On every domain $G\subsetneq\R^n$, $n\geq1$, the point pair function $p_G$ is a quasi-metric with a constant less than or equal to $\sqrt{5}\slash2$.
\end{theorem}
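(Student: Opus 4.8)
The plan is to reduce the general $n$-dimensional inequality to the one-dimensional two-boundary-point case already settled in Corollary~\ref{cor_1dimboundary} (equivalently Theorem~\ref{thm_quasi1disk}). The key observation is that $p_G(x,y)$ depends on the domain $G$ only through the two numbers $d_G(x)$ and $d_G(y)$, and that the distance function $d_G$ is $1$-Lipschitz; this lets one replace $G$ by a carefully chosen bounded interval in $\R$ so that the distances entering the left-hand side are not decreased while those entering the right-hand side are not increased.

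Concretely, fix $x,y,z\in G$, which we may assume pairwise distinct (otherwise the inequality $p_G(x,y)\le\tfrac{\sqrt{5}}{2}(p_G(x,z)+p_G(z,y))$ is immediate since $\tfrac{\sqrt{5}}{2}\ge1$). Write $s=|x-z|$, $t=|z-y|$, $q=|x-y|$ and $d_x=d_G(x)$, $d_y=d_G(y)$, $d_z=d_G(z)$, all positive since $G\subsetneq\R^n$. I would record the three elementary facts $q\le s+t$ (triangle inequality in $\R^n$), $d_z\le\min\{d_x+s,\,d_y+t\}$, and $|d_x-d_y|\le q$ (the distance function $d_G$ is $1$-Lipschitz).

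Next, introduce the bounded interval $G''=(-d_x,\,s+t+d_y)\subset\R$ together with the points $x''=0$, $z''=s$, $y''=s+t$, all of which lie in $G''$. A direct computation gives $d_{G''}(x'')=\min\{d_x,\,s+t+d_y\}=d_x$ and $d_{G''}(y'')=\min\{s+t+d_x,\,d_y\}=d_y$ (here using $|d_x-d_y|\le q\le s+t$), while $d_{G''}(z'')=\min\{s+d_x,\,t+d_y\}\ge d_z$. Since $|x''-y''|=s+t\ge q$ and $w\mapsto w/\sqrt{w^2+4d_xd_y}$ is increasing on $[0,\infty)$, this yields $p_{G''}(x'',y'')\ge p_G(x,y)$; and since $|x''-z''|=s$, $|z''-y''|=t$ with $d_{G''}(z'')\ge d_z$, larger denominators give $p_{G''}(x'',z'')\le p_G(x,z)$ and $p_{G''}(z'',y'')\le p_G(z,y)$.

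Finally, $G''$ is a $1$-dimensional domain with exactly two boundary points, so Corollary~\ref{cor_1dimboundary} applies and gives $p_{G''}(x'',y'')\le\tfrac{\sqrt{5}}{2}\big(p_{G''}(x'',z'')+p_{G''}(z'',y'')\big)$. Chaining this with the three comparisons above produces $p_G(x,y)\le\tfrac{\sqrt{5}}{2}\big(p_G(x,z)+p_G(z,y)\big)$, the desired quasi-metric inequality. I do not expect a serious obstacle here: all the genuine analytic difficulty has already been handled in Lemma~\ref{lem_ineqppf} and Theorem~\ref{thm_quasi1disk}, which we may invoke, and the reduction itself is routine. The one point that must be checked carefully is that the interval $G''$ realizes the boundary distances \emph{exactly} equal to $d_x$ and $d_y$ at the images of $x$ and $y$, and a value $\ge d_z$ at the image of $z$ — i.e. that the three minima above collapse as claimed — which is precisely where the inequalities $q\le s+t$, $d_z\le\min\{d_x+s,d_y+t\}$ and $|d_x-d_y|\le q$ are used.
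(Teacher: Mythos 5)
Your argument is correct and is essentially the paper's own reduction: you compare the triple $x,z,y$ with a collinear configuration in a bounded interval whose endpoints lie at distances $d_G(x)$ and $d_G(y)$ beyond the images of $x$ and $y$, verify that the boundary distance at the image of $z$ does not decrease, and then invoke the one-dimensional result (Theorem~\ref{thm_quasi1disk}, Corollary~\ref{cor_1dimboundary}); this is precisely the interval $\Delta$ appearing in the paper's proof, up to translation. The only difference is a mild streamlining: by appealing to the $1$-Lipschitz property of $d_G$ you bypass the paper's intermediate doubly punctured domain $\R^n\setminus\{u,v\}$ and its separate case $u=v$, which the paper settles via Theorem~\ref{thm_pInR0}.
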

\begin{proof}

To prove that the point pair function  $p_G$ is a quasi-metric, we  only need to find such a constant $c\geq1$ that 
$$
p_G(x,y)\leq c(p_G(x,z)+p_G(z,y))
$$
for all points $x$, $y$, $z\in G$.
Let
$$
c(x,y,z;G)=\frac{p_G(x,y)}{p_G(x,z)+p_G(z,y)}\,,
$$
where $x$, $y$, and $z$ are distinct points from $G$. 
Define
\begin{equation}\label{c*}
c^*=\sup_{x,y,z,G}c(x,y,z;G),
\end{equation}
where the supremum is taken over all domains $G\subsetneq\R^n$ and triples of distinct points. We will call such domains and triples admissible.
We need to prove that $c^*=\sqrt{5}/2$.

Let us fix a domain $G\subsetneq\R^n$ and two distinct points $x$, $y\in G$. Since $G\neq \R^n$, the boundary $\partial G\neq\emptyset$, therefore, there exist points $u$, $v\in \partial G$ such that $d_G(x)=|x-u|$ and $d_G(y)=|y-v|$. In the general case, the points $u$ and $v$ might not be unique because there can be several boundary points on the spheres $S^{n-1}(x,d_G(x))$ and $S^{n-1}(y,d_G(y))$.


We note that the value $p_G$ decreases as $G$ grows, i.e. if $G\subset G_1$, then $p_{G_1}(\widetilde{x},\widetilde{y})\le p_G(\widetilde{x},\widetilde{y})$ for all $\widetilde{x}$, $\widetilde{y}\in G$.

Consider the two following cases.  \smallskip

1) If $u=v$, then we can set $G_1=\R^n\backslash \{u\}$. It is clear that $G\subset G_1$ and  $p_G(x,y)=p_{G_1}(x,y)$. Taking into account the invariance of $p_G$ under the shifts of $\R^n$, we can assume that $u=0$. Then, by Theorem~\ref{thm_pInR0}, we have $p_{G_1}(x,y)\le p_{G_1}(x,z)+p_{G_1}(z,y)$ for all $z\in G_1$. From the monotonicity of $p_G$ with respect to $G$, we obtain
$$
p_{G}(x,y)=p_{G_1}(x,y)\le p_{G_1}(x,z)+p_{G_1}(z,y)\le p_{G}(x,z)+p_{G}(z,y).
$$
Therefore, $c(x,y,z;G)\le 1$.


\smallskip

2) Let now $u\neq v$. We put $G_1=G_1^{u,v}=\R^n\backslash \{u,v\}$.  Then $G\subset G_1$. Moreover, $p_{G}(x,y)=p_{G_1}(x,y)$ and $p_{G_1}(x,z)+p_{G_1}(z,y)\le p_{G}(x,z)+p_{G}(z,y)$. Consequently, $c(x,y,z;G)\le c(x,y,z;G_1)$ and the supremum in \eqref{c*} is attained on domains of the type $G_1^{u,v}$.

Denote $a=|x-z|$, $b=|z-y|$, $c=|x-y|$, $\rho=|x-u|$, $r=|y-v|$. By the triangle inequality, we have
\begin{align*}
c\le a+b,\quad \rho&=|x-u|\le |x-v|\le c+r\le a+b+r, \\ r&=|y-v|\le |y-u|\le c+\rho\le a+b+\rho.
\end{align*}
Now consider the segment $\Delta$ on $\R^1$ with endpoints $\widetilde{u}:=-a-\rho$, $\widetilde{v}:=b+r$. Let $\widetilde{z}=0$, $\widetilde{x}=-a$, $\widetilde{y}=b$. Then $\widetilde{u}<\widetilde{x}<\widetilde{z}<\widetilde{y}<\widetilde{v}$ and
$$
|\widetilde{x}-\widetilde{u}|=\rho=|x-u|,\quad |\widetilde{y}-\widetilde{v}|=r=|y-v|,\quad |\widetilde{x}-\widetilde{y}|=a+b\ge c=|x-y|,
$$
$$
|\widetilde{x}-\widetilde{z}|=a=|x-z|,\quad |\widetilde{z}-\widetilde{y}|=b=|z-y|.
$$
With the help of the triangle inequality, we also have
$$
|\widetilde{x}-\widetilde{v}|=a+b+r\ge |x-u|=|\widetilde{x}-\widetilde{u}|, \quad |\widetilde{y}-\widetilde{u}|=a+b+\rho\ge |y-v|=|\widetilde{y}-\widetilde{v}|.
$$
Therefore, $d_\Delta(\widetilde{x})=\rho$, $d_\Delta(\widetilde{y})=r$.

At last,
$$
|\widetilde{z}-\widetilde{u}|=a+\rho\ge |z-u|,\quad |\widetilde{z}-\widetilde{v}|=b+r\ge |z-v|,
$$
and this implies
$d_\Delta(\widetilde{z})\ge d_{G_1}(z)$.
Using the obtained inequalities and the fact that the function $t\mapsto t/\sqrt{t^2+\gamma^2}$ is increasing on $\R_+$ when $\gamma$ is a real nonzero constant, we have
$$
p_\Delta(\widetilde{x},\widetilde{y})=\frac{|\widetilde{x}-\widetilde{y}|}{\sqrt{|\widetilde{x}-\widetilde{y}|^2+4 \rho r}}\ge\frac{|{x}-{y}|}{\sqrt{|{x}-{y}|^2+4 \rho r}}=
p_{G_1}({x},{y}),$$
$$
p_\Delta(\widetilde{x},\widetilde{z})=\frac{|\widetilde{x}-\widetilde{z}|}{\sqrt{|\widetilde{x}-\widetilde{z}|^2+4 \rho d_\Delta(\widetilde{z}) }}=
\frac{|{x}-{z}|}{\sqrt{|{x}-{z}|^2+4 \rho d_\Delta(\widetilde{z}) }}
$$$$\le\frac{|{x}-{z}|}{\sqrt{|{x}-{z}|^2+4 \rho d_{G_1}(z)}}=
p_{G_1}({x},{z})
$$
and,
similarly, $p_\Delta(\widetilde{z},\widetilde{y})\le p_{G_1}({z},{y})$. From this, we deduce that $c(x,y,z;G_1)\le c(\widetilde{x},\widetilde{y},\widetilde{z};\Delta)$.

Since the point pair function is invariant under shifts and stretchings, we can assume that $\Delta=[-1,1]$. But, by Theorem \ref{thm_quasi1disk}, the point pair function fulfills the inequality
\begin{equation}\label{cp}
p_\Delta(\widetilde{x},\widetilde{y})\leq \frac{\sqrt{5}}{2}\,(p_\Delta(\widetilde{x},\widetilde{z})+p_\Delta(\widetilde{z},\widetilde{y}))
\end{equation}
for all points $\widetilde{x}$, $\widetilde{y}$, $\widetilde{z}\in(-1,1)$ with the constant $\sqrt{5}\slash 2$. Therefore, we have $c^*\le \sqrt{5}\slash 2$.

To prove that $c^*=\sqrt{5}\slash2$, consider $\Delta=[-1,1]\subset \R^1$ as a part of $\R^n$. Let $\widetilde{u}=-1$ and $\widetilde{v}=1$ be  the endpoints of $\Delta$. Consider the domain $G_1=G_1^{\widetilde{u},\widetilde{v}}$. For all $\widetilde{x}$, $\widetilde{y} \in\Delta$ we have $p_{G_1}(\widetilde{x},\widetilde{y})=p_{\Delta}(\widetilde{x},\widetilde{y})$. Since in  \eqref{cp} the constant $\sqrt{5}\slash 2$ is sharp if we take $\widetilde{x}$, $\widetilde{y}$ and $\widetilde{z}$ from $(-1,1)$, we obtain that it is sharp for $G_1$  and, therefore, for the class of proper subdomains on $\R^n$. The theorem is proved.
\end{proof}

Now, we will investigate the sharpness of the constant $\sqrt{5}\slash 2$, if a proper subdomain $G$ of $\R^n$ is fixed.

\begin{lemma}\label{lem_csharpness}
If a domain $G\subsetneq\R^n$, $n\geq1$, contains some ball $B^n(z_0,r)$ and there are two points $u$, $v\in\partial G$ such that the segment $[u,v]$ is a diameter of $B^n(z_0,r)$, then $c=\sqrt{5}\slash2$ is the best possible constant for which the inequality
$$
p_G(x,y)\le c(p_G(x,z)+p_G(z,y)), \quad x,y,z
\in G,
$$
is valid.
\end{lemma}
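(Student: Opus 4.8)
The plan is to combine Theorem~\ref{thm_ppfquasiconstant}, which already gives $p_G(x,y)\le\frac{\sqrt5}{2}\bigl(p_G(x,z)+p_G(z,y)\bigr)$ for all $x,y,z\in G$, with an \emph{explicit} triple of distinct points of $G$ at which this inequality becomes an equality. Once such a triple is exhibited, no constant $c<\sqrt5/2$ can satisfy the inequality for all triples in $G$, while $\sqrt5/2$ itself works by Theorem~\ref{thm_ppfquasiconstant}, so $\sqrt5/2$ is best possible for $G$.

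Since $p_G$ is invariant under translations and stretchings, I would first normalize so that $z_0=0$ and $r=1$; as $z_0$ is the midpoint of the diameter $[u,v]$ this gives $u=-v$, and $w:=v$ is a unit vector with $u=-w$, so $B^n(0,1)\subset G$ and $\pm w\in\partial G$. As the candidate triple I take the image under this normalization of the extremal configuration $x=-1/3$, $z=0$, $y=1/3$ from Lemma~\ref{lem_ineqppf} (equivalently Theorem~\ref{thm_quasi1disk}), namely
$$x=-\tfrac13 w,\qquad z=0,\qquad y=\tfrac13 w,$$
which are three distinct points of $B^n(0,1)\subset G$ with $|x-y|=\tfrac23$ and $|x-z|=|z-y|=\tfrac13$.

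The key step is to determine the three boundary distances by sandwiching. The inclusion $B^n(0,1)\subset G$ forces $d_G(0)\ge1$ and $d_G(\pm\tfrac13 w)\ge 1-\tfrac13=\tfrac23$ (since $B^n(\pm\tfrac13 w,\tfrac23)\subset B^n(0,1)$), while $\pm w\in\partial G$ forces $d_G(0)\le|{-w}|=1$, $d_G(-\tfrac13 w)\le|{-\tfrac13 w}-({-w})|=\tfrac23$ and $d_G(\tfrac13 w)\le|\tfrac13 w-w|=\tfrac23$. Hence $d_G(0)=1$ and $d_G(\pm\tfrac13 w)=\tfrac23$, and substituting into \eqref{ppf} yields
$$p_G(x,y)=\frac{2/3}{\sqrt{(2/3)^2+4(2/3)^2}}=\frac1{\sqrt5},\qquad p_G(x,z)=p_G(z,y)=\frac{1/3}{\sqrt{(1/3)^2+4\cdot\tfrac23\cdot1}}=\frac15,$$
so that $p_G(x,y)=\frac{\sqrt5}{2}\bigl(p_G(x,z)+p_G(z,y)\bigr)$. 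Combined with Theorem~\ref{thm_ppfquasiconstant}, this proves the assertion.

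I do not expect a genuine obstacle here; the only point requiring care is the sandwich for $d_G$, which uses both hypotheses essentially — the ball $B^n(z_0,r)$ for the lower bounds and the two boundary points $u,v$ for the upper bounds. One should also note that in the final computation only the favourable estimate for each distance is needed ($d_G(x)\,d_G(y)\le 4/9$ in the numerator, $d_G(x)\,d_G(z)\ge 2/3$ and $d_G(z)\,d_G(y)\ge 2/3$ in the denominator), so the inequality $p_G(x,y)\ge\frac{\sqrt5}{2}\bigl(p_G(x,z)+p_G(z,y)\bigr)$, and hence the sharpness, would follow even without pinning down the exact values of the distances.
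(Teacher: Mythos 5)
Your proof is correct and takes essentially the same route as the paper: the paper also combines Theorem~\ref{thm_ppfquasiconstant} with the extremal triple $x=z_0+(u-z_0)/3$, $z=z_0$, $y=z_0+(v-z_0)/3$, which is exactly your triple after your normalization $z_0=0$, $r=1$. The only difference is that you spell out the sandwich argument giving $d_G(z_0)=r$ and $d_G(x)=d_G(y)=2r/3$ and the resulting equality $p_G(x,y)=(\sqrt{5}/2)(p_G(x,z)+p_G(z,y))$, which the paper states without detailed verification.
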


\begin{proof}
By Theorem \ref{thm_ppfquasiconstant}, the point pair function is a quasi-metric with the constant $\sqrt{5}\slash2$. The sharpness of this constant follows from the fact that the equality
\begin{equation}\label{52G}
p_G(x,y)=(\sqrt{5}\slash2)(p_G(x,z)+p_G(z,y))
\end{equation}
holds for the points $x=z_0+(u-z_0)\slash3$, $z=z_0$ and $y=z_0+(v-z_0)\slash3$ (see Figure~\ref{fig1a}).
\end{proof}

\begin{figure}[ht]
\scalebox{0.4}{\includegraphics{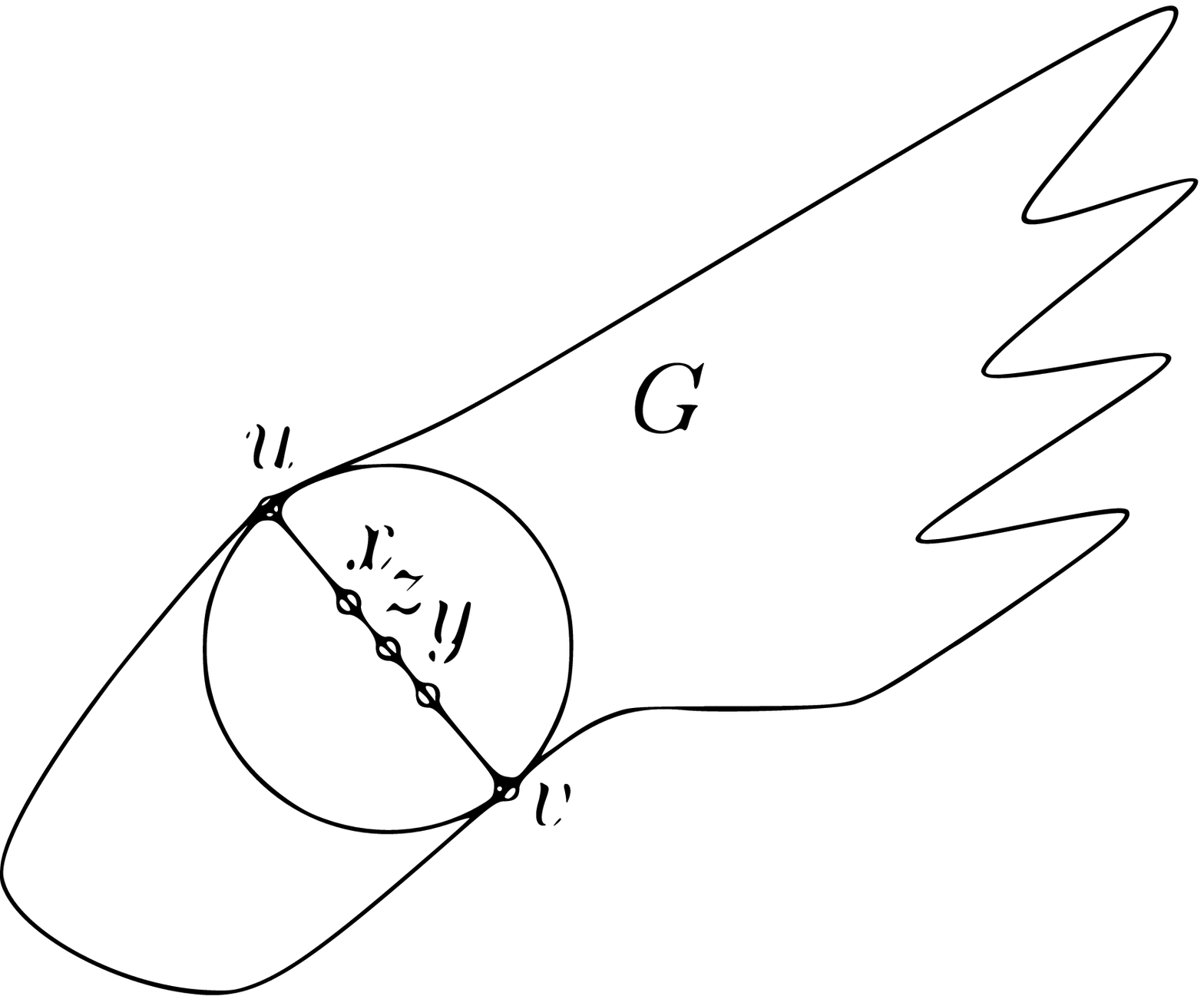}}
       \caption{A domain $G$ and points $x,y,z\in G$ for which the equality \eqref{52G} holds.}
       \label{fig1a}
\end{figure}

It follows from Lemma \ref{lem_csharpness} that the point pair function $p_G$ is a quasi-metric with the best possible constant $\sqrt{5}\slash2$ if the domain $G$ is, for instance, a ball, a hypercube, a hyperrectangle, a multipunctured real space of any dimension $n\geq1$, or a two-dimensional, regular and convex polygon with an even number of vertices.


\section{The generalized point pair function}


In this section, we will consider the generalized version of the point pair function. Namely, note that, by replacing the constant 4 with some $\alpha>0$, we obtain the function
\begin{align}\label{ppfalpha}
p^\alpha_G(x,y)=\frac{|x-y|}{\sqrt{|x-y|^2+\alpha d_G(x)d_G(y)}}\,.  \end{align}
Let us first consider the case where the domain $G$ is the positive real axis.

\begin{theorem}\label{thm_alphappfR}
For a constant $\alpha>0$, the function
\begin{align*}
p^\alpha_{\R^+}(x,y)=\frac{|x-y|}{\sqrt{(x-y)^2+\alpha xy}}\,,
\quad
x,y>0,
\end{align*}
is a metric if and only if $\alpha\leq12$.
\end{theorem}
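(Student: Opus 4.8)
The plan is to check the three metric axioms; positivity and symmetry are immediate (for $x,y>0$, $\alpha>0$ the radicand $(x-y)^2+\alpha xy$ is strictly positive, the expression is symmetric, and it vanishes exactly when $x=y$), so the whole content is the triangle inequality. The first step is a change of variables killing the homogeneity: dividing numerator and denominator of $p^\alpha_{\R^+}(x,y)$ by $\sqrt{xy}$ and using $\sqrt{x/y}-\sqrt{y/x}=(x-y)/\sqrt{xy}$ with $x/y=e^{r}$, $r=\ln x-\ln y$, one obtains
\[
p^\alpha_{\R^+}(x,y)=\Psi\bigl(|\ln x-\ln y|\bigr),\qquad
\Psi(s):=\frac{w(s)}{\sqrt{w(s)^2+\alpha}},\quad w(s):=2\sinh\tfrac{s}{2}.
\]
Putting $\xi=\ln x$, $\eta=\ln y$, $\zeta=\ln z$, the triangle inequality turns into $\Psi(|\xi-\eta|)\le\Psi(|\xi-\zeta|)+\Psi(|\zeta-\eta|)$ for all $\xi,\eta,\zeta\in\R$.

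Next I would reduce this to two one-variable statements. Assuming $\xi\le\eta$: if $\zeta\notin[\xi,\eta]$, one of $|\xi-\zeta|$, $|\zeta-\eta|$ is $\ge\eta-\xi=|\xi-\eta|$, so monotonicity of $\Psi$ on $[0,\infty)$ closes the case; if $\xi\le\zeta\le\eta$, then $|\xi-\eta|=|\xi-\zeta|+|\zeta-\eta|$ and the inequality is exactly subadditivity of $\Psi$ on $[0,\infty)$. So it suffices to prove: for $\alpha\le12$, $\Psi$ is nondecreasing and subadditive on $[0,\infty)$; and for $\alpha>12$, $\Psi$ is not subadditive.

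The heart is a second-derivative computation, carried out in the variable $w=w(s)=2\sinh(s/2)$ — the substitution that makes things collapse (essentially the $x=u^2$ trick used earlier in the paper). Writing $\phi(w)=w/\sqrt{w^2+\alpha}$ we have $\phi'(w)=\alpha(w^2+\alpha)^{-3/2}>0$ and $\phi''(w)=-3\alpha w(w^2+\alpha)^{-5/2}$, while $w'(s)=\cosh(s/2)$, $w'(s)^2=1+w^2/4$, $w''(s)=w/4$; substituting into $\Psi''(s)=\phi''(w)\,w'(s)^2+\phi'(w)\,w''(s)$ and factoring gives
\[
\Psi''(s)=\frac{\alpha\,w}{4\,(w^2+\alpha)^{5/2}}\bigl(\alpha-12-2w^2\bigr).
\]
For $\alpha\le12$ the bracket is $\le0$ for all $w\ge0$, so $\Psi$ is concave on $[0,\infty)$; since $\Psi(0)=0$, concavity gives $\Psi(s)\ge\frac{s}{s+u}\Psi(s+u)$ and $\Psi(u)\ge\frac{u}{s+u}\Psi(s+u)$, which add to subadditivity, and $\Psi'(s)=\phi'(w)\cosh(s/2)>0$ gives monotonicity. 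This proves the triangle inequality, hence the ``if'' direction.

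Conversely, if $\alpha>12$ the same formula shows $\Psi''(s)>0$ for all small $s>0$ (while $0<w(s)<\sqrt{(\alpha-12)/2}$), so $\Psi$ is strictly convex on some $[0,s_0]$; as $\Psi(0)=0$ this forces $\Psi(2s)>2\Psi(s)$ for $0<s<s_0/2$, and then $x=1$, $z=e^{s}$, $y=e^{2s}$ satisfy $p^\alpha_{\R^+}(x,z)=\Psi(s)=p^\alpha_{\R^+}(z,y)$ and $p^\alpha_{\R^+}(x,y)=\Psi(2s)>p^\alpha_{\R^+}(x,z)+p^\alpha_{\R^+}(z,y)$, so $p^\alpha_{\R^+}$ is not a metric. (Equivalently, this triple with $t=e^{s}$ forces $\alpha\le3(t+1)^2/t$, whose minimum over $t>0$ is $12$.) The one step that actually requires insight is choosing the variable $w=2\sinh(s/2)$: a brute-force differentiation in $x$ or $s$ is unpleasant, and one cannot shortcut via ``concave $\circ$ convex'' because that composition need not be concave — so the explicit sign of $\Psi''$, and with it the constant $12$, must be extracted by computation.
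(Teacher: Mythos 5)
Your proof is correct, but it follows a genuinely different route from the paper. You exploit the fact that $p^\alpha_{\R^+}$ is a function of the logarithmic distance, $p^\alpha_{\R^+}(x,y)=\Psi(|\ln x-\ln y|)$ with $\Psi(s)=w/\sqrt{w^2+\alpha}$, $w=2\sinh(s/2)$, and then invoke the standard metric-preserving criterion: since $\Psi(0)=0$, it suffices that $\Psi$ be nondecreasing and subadditive on $[0,\infty)$, which you get from concavity via the computation $\Psi''(s)=\frac{\alpha w}{4(w^2+\alpha)^{5/2}}(\alpha-12-2w^2)$; the same formula gives strict convexity near $0$ for $\alpha>12$ and hence a violating triple $1,e^s,e^{2s}$. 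The paper instead fixes the intermediate point, normalizes $z=1$, substitutes $u=1/\sqrt{x}$, $v=\sqrt{y}$ (the same hyperbolic-sine structure in disguise, since $u-u^{-1}$ is twice the hyperbolic sine of half the log-distance), and runs a two-variable critical-point and boundary analysis of $G(u,v)$, showing all interior critical points lie on the diagonal $u=v$ and reducing the diagonal case to the quadratic inequality $(t-2)(3t-(\alpha-6))\ge0$ for $t=u^2+u^{-2}\ge2$, which holds exactly when $\alpha\le12$ and fails on an interval of $t$ when $\alpha>12$. Your argument is shorter and more conceptual: the one-variable concavity/subadditivity reduction removes the need for the critical-point analysis and the limiting boundary checks, and it isolates where the constant $12$ comes from in a single sign condition. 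What the paper's formulation buys is that its intermediate quantities (the normalized values $B=|R-1|/\sqrt{\alpha R}$, etc., and the inequality in the form \eqref{ABC2}) are exactly what is fed into Lemma \ref{style41} to handle the $n$-dimensional punctured space in Theorem \ref{pametr}, so its more computational shape is reused later; your sharpness computation (minimizing $3(t+1)^2/t$, minimum $12$ at $t=1$) matches the paper's in substance.
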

\begin{proof}
To prove that for every fixed $0<\alpha\leq12$, the function $p^\alpha_{\R^+}(x,y)$ is a metric on the positive real axis, it is sufficient to establish the triangle inequality $p^\alpha_{\R^+}(x,y)\leq p^\alpha_{\R^+}(x,z)+p^\alpha_{\R^+}(z,y)$. Fix first two points $x,y>0$. By symmetry, we can assume that $x\leq y$. Next, we fix $z$ such that the sum $p^\alpha_{\R^+}(x,z)+p^\alpha_{\R^+}(z,y)$ is at minimum. Without loss of generality, we can assume that $0<x\leq z\leq y$ because, for all $z\in(0,x)$,
$$
p^\alpha_{\R^+}(x,2x-z)+p^\alpha_{\R^+}(2x-z,y)<p^\alpha_{\R^+}(x,z)+p^\alpha_{\R^+}(z,y),
$$
and, if $y<z$, then the triangle inequality $p^\alpha_{\R^+}(x,y)\leq p^\alpha_{\R^+}(x,z)+p^\alpha_{\R^+}(z,y)$ holds trivially. Because the function $p^\alpha_{\R^+}$ is invariant under any stretching by any factor $r>0$, we can assume that $z=1$.

Our aim is to prove $p^\alpha_{\R^+}(x,y)\leq p^\alpha_{\R^+}(x,1)+p^\alpha_{\R^+}(1,y)$ or, equivalently,
\begin{align}\label{F}
F(x,z):=\frac{1-x}{\sqrt{(1-x)^2+\alpha x}}\,+\,\frac{y-1}{\sqrt{(y-1)^2+\alpha y}}\,-\,\frac{y-x}{\sqrt{(x-y)^2+\alpha xy}}\,\ge0,
\end{align}
for $0<x\leq1\leq y$.
By denoting $u=1/\sqrt{x}$, $v=\sqrt{y}$, we can  write \eqref{F} as
\begin{align}\label{G}
G(u,v):=\frac{u-u^{-1}}{\sqrt{(u-u^{-1})^2+\alpha}}\,+\,\frac{v-v^{-1}}{\sqrt{(v-v^{-1})^2+\alpha }}\,-\,\frac{uv-(uv)^{-1}}{\sqrt{(uv-(uv)^{-1})^2+\alpha}}\,\ge 0,
\end{align}
$u$, $v\ge 1$.
Now, we will find the critical points of $G(u,v)$. We have
\begin{align*}
\frac{\partial G(u,v)}{\partial u}&=\,\alpha\, \frac{1+u^{-2}}{\sqrt{((u-u^{-1})^2+\alpha)^3}}-\alpha v\,\frac{1+(uv)^{-2}}{\sqrt{((uv-(uv)^{-1})^2+\alpha)^3}},\\
\frac{\partial G(u,v)}{\partial v}&=\,\alpha\, \frac{1+v^{-2}}{\sqrt{((v-v^{-1})^2+\alpha)^3}}-\alpha u\,\frac{1+(uv)^{-2}}{\sqrt{((uv-(uv)^{-1})^2+\alpha)^3}}.
\end{align*}
If
$$
\frac{\partial G(u,v)}{\partial u}\,=\frac{\partial G(u,v)}{\partial v}\,=0,
$$
then it is easy to show that
$$
\frac{u+u^{-1}}{\sqrt{((u-u^{-1})^2+\alpha)^3}}=\frac{v+v^{-1}}{\sqrt{((v-v^{-1})^2+\alpha)^3}}\,,
$$
consequently,
\begin{equation}\label{uuminusone}
\frac{u+u^{-1}}{\sqrt{((u+u^{-1})^2+\alpha-4)^3}}=\frac{v+v^{-1}}{\sqrt{((v+v^{-1})^2+\alpha-4)^3}}\,.
\end{equation}
The function 
$t\mapsto t(t^2+\alpha-4)^{-3\slash2}$ is monotonic on $[2,\infty)$. Since $u+u^{-1}\ge 2$ and $v+v^{-1}\ge 2$ for $u$, $v\geq1$, from \eqref{uuminusone} we deduce that $u+u^{-1}=v+v^{-1}$. From the monotonicity of the function $u+u^{-1}$ on $[1,\infty)$, it follows that $u=v$. Consequently, all the critical points of $G(u,v)$ are on the line $u=v$.

If $u\to u_0$ or $v\to v_0$ and either $u_0$ or $v_0$ equals $1$, then  $G(u,v)$ tends to a non-negative value. Similarly, this condition holds if $u_0$ or $v_0$ equals $+\infty$. Therefore, to prove that the inequality \eqref{G} holds we only need to show that $G(u,u)\ge 0$, $u\ge 1$ or, equivalently,
$$
2\,\frac{u-u^{-1}}{\sqrt{(u-u^{-1})^2+\alpha}}\,-\,\frac{u^2-u^{-2}}{\sqrt{(u^2-u^{-2})^2+\alpha}}\,\ge0.
$$

This inequality can be written as
$$
2\sqrt{(u^2-u^{-2})^2+\alpha}\ge(u+u^{-1})\sqrt{(u-u^{-1})^2+\alpha}
$$
or
\begin{equation}\label{44}
4(u^4+u^{-4}+\alpha-2)\ge(u^2+u^{-2}+2)(u^2+u^{-2}+\alpha-2).
\end{equation}
By denoting $t=u^2+u^{-2}$, we will have $t\ge 2$, and the inequality \eqref{44} takes the form
$$
4(t^2+\alpha-4)\ge(t+2)(t+\alpha-2)
$$
or, equivalently,
\begin{equation}\label{3t2}
3 t^2-\alpha t+2\alpha-12=(t-2)(3t-(\alpha-6))\ge 0, \quad t\ge 2.
\end{equation}
The inequality \eqref{3t2} is valid for $\alpha\ge12$ and $t\ge 2$ because for such $\alpha$ and $t$ we have
$$
3t-(\alpha-6)\ge 6-(\alpha-6)=12-\alpha\ge 0.
$$
Consequently, the inequality \eqref{F} holds and, for $0<\alpha\leq12$, the function $p^\alpha_{\R^+}$ is a metric. It also follows that the constant $12$ here is sharp because, for $\alpha>12$, we have $$
3 t^2-\alpha t+2\alpha-12<0, \quad  2<t<(\alpha-6)/3,
$$
and, therefore, the inequality \eqref{3t2} is not valid at every point of $[2,+\infty)$.
\end{proof}

In Theorem~\ref{pametr}, we prove a result about the function $p_G^\alpha$ similar to Theorem \ref{thm_alphappfR} but for the case where $G=\R^n\backslash\{0\}$. See Figure \ref{fig2} for the disks of the function $p_G^\alpha$ in $\R^2\backslash\{0\}$. However, in order to prove Theorem \ref{pametr}, we need to first consider the following lemma.

\begin{lemma}\label{style41}
If $A,B,C>0$ and $a,b,c\geq0$ are chosen so that the inequalities
\begin{equation}\label{ABC2}
\frac{A}{\sqrt{1+A^2}}\,\le\, \frac{B}{\sqrt{1+B^2}}\,+\,\frac{C}{\sqrt{1+C^2}}\,
\end{equation}
and $a\le b+c$ hold, then
$$
{\sqrt\frac{A^2+a^2}{1+A^2+a^2}}\,\le\, \sqrt {\frac{B^2+b^2}{1+B^2+b^2}}\,+\,\sqrt\frac{C^2+c^2}{1+C^2+c^2}\,.
$$
\end{lemma}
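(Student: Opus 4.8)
The plan is to push everything through the two elementary functions $\phi(t)=t/\sqrt{1+t^2}$, $t\ge0$, and $\psi(s)=\sqrt{s/(1+s)}$, $s\ge0$. Since $\phi(t)=\psi(t^2)$, the hypothesis \eqref{ABC2} is precisely $\phi(A)\le\phi(B)+\phi(C)$ and the conclusion is precisely $\psi(A^2+a^2)\le\psi(B^2+b^2)+\psi(C^2+c^2)$. I would first record the facts I need: $\phi$ is an increasing bijection of $[0,\infty)$ onto $[0,1)$; $\psi$ is increasing, vanishes at $0$, and is strictly concave on $(0,\infty)$ (immediate from $\psi''<0$), so that each increment $t\mapsto\psi(x+t)-\psi(x)$ is non-increasing in $x>0$; and $\psi'(s)=\tfrac12 s^{-1/2}(1+s)^{-3/2}$.

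Next I would make two soft reductions. If $\phi(B)+\phi(C)\ge1$ the right-hand side is already $\ge1>\psi(A^2+a^2)$, so assume $\phi(B)+\phi(C)<1$. Since $\psi(A^2+a^2)$ increases in both $a$ and $A$, enlarging $a$ to $b+c$ and $A$ to $A_*:=\phi^{-1}\!\bigl(\phi(B)+\phi(C)\bigr)\in(0,\infty)$ only weakens the desired inequality while preserving \eqref{ABC2} and $a\le b+c$; hence it suffices to show
\[
\psi\!\bigl(A_*^2+(b+c)^2\bigr)\le\psi(B^2+b^2)+\psi(C^2+c^2),\qquad b,c\ge0 .
\]
The equality $\phi(A_*)=\phi(B)+\phi(C)$ forces $A_*>B$ and $A_*>C$, and this single strict inequality is what will make the argument close.

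I would then study $G(b,c):=\psi(B^2+b^2)+\psi(C^2+c^2)-\psi\!\bigl(A_*^2+(b+c)^2\bigr)$ on the quadrant $[0,\infty)^2$, noting $G(0,0)=\phi(B)+\phi(C)-\phi(A_*)=0$, and show $G\ge0$ by locating where its infimum can sit. Along a minimizing sequence, either it stays bounded and subconverges to a minimizer — an interior critical point or a boundary point — or it runs off to infinity. On the edge $c=0$, using $\phi(C)=\psi(A_*^2)-\psi(B^2)$, one gets $G(b,0)=\bigl(\psi(B^2+b^2)-\psi(B^2)\bigr)-\bigl(\psi(A_*^2+b^2)-\psi(A_*^2)\bigr)\ge0$ from concavity of $\psi$ and $0<B^2<A_*^2$; the edge $b=0$ is symmetric and the corner value is $0$. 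As $b,c\to\infty$ together $G\to1$, and if only one escapes then $G\to\psi(C^2+c_*^2)\ge0$ (or $\psi(B^2+b_*^2)\ge0$), so $\liminf G\ge0$ at infinity. The one remaining case, interior critical points, is the crux.

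At an interior critical point $\nabla G=0$; writing $P=\sqrt{B^2+b^2}$, $Q=\sqrt{C^2+c^2}$, $R=\sqrt{A_*^2+(b+c)^2}$ and $\chi(t):=t(1+t^2)^{3/2}$, the equations $\partial_b G=\partial_c G=0$ collapse to $b/\chi(P)=c/\chi(Q)=(b+c)/\chi(R)$, and adding numerators gives $\chi(R)=\chi(P)+\chi(Q)$. Because $\chi(t)=\phi(t)(1+t^2)^2$, this is $\phi(R)(1+R^2)^2=\phi(P)(1+P^2)^2+\phi(Q)(1+Q^2)^2$; since $R^2-P^2=(A_*^2-B^2)+2bc+c^2>0$ and similarly $R^2-Q^2>0$, replacing $(1+P^2)^2,(1+Q^2)^2$ by the strictly larger $(1+R^2)^2$ and dividing yields $\phi(R)<\phi(P)+\phi(Q)$, i.e.\ $G>0$ there. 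Then $\inf G\ge0$ in all cases, so $G\ge0$ and the lemma is proved. I expect this interior step to be the genuine obstacle: the tempting shortcut of shrinking $A$ to $\sh(\arsh B+\arsh C)$ and invoking Lemma~\ref{lem_trigABC} fails, because $\phi(A)\le\phi(B)+\phi(C)$ does not force $A\le\sh(\arsh B+\arsh C)$ (the subadditivity of $\th$ sends that comparison the wrong way), so the direct variational analysis seems unavoidable, and it succeeds only thanks to the coincidence $\chi(t)=\phi(t)(1+t^2)^2$ together with the crude bound $R>\max\{P,Q\}$.
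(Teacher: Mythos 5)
Your proof is correct, and it takes a genuinely different route from the paper's. The paper makes the same initial reduction (one may assume equality in \eqref{ABC2}, and the case $a\le b+c$ is absorbed by monotonicity, so everything hinges on showing that the same two-variable function $G(b,c)\ge 0$), but from there it argues by contradiction along rays: assuming $G(x,y)<0$, it studies $g(t)=G(tx,ty)$ on $[0,1]$, shows $g(t)>0$ for small $t$ by computing $\lim_{t\to0}g'(t)/t$ and proving the Cauchy--Schwarz-type estimate \eqref{xByC} (using $(x+y)^2/(b_1+c_1)\le x^2/b_1+y^2/c_1$ together with $a_1>b_1,c_1$), and then reaches a contradiction at the first zero $t_0$ of $g$ after which $g<0$, since the same computation applied to $A_1,B_1,C_1$ (where equality again holds) forces $g'(t_0)>0$. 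You instead minimize $G$ globally: the edges are handled by concavity of $\psi$, the behaviour at infinity by direct limits, and interior critical points by the identity $\chi(R)=\chi(P)+\chi(Q)$ with $\chi(t)=t(1+t^2)^{3/2}=\phi(t)(1+t^2)^2$, which together with $R>\max\{P,Q\}$ gives $\phi(R)<\phi(P)+\phi(Q)$, i.e.\ $G>0$ at any critical point; all your computations check out ($\psi'(s)=\tfrac12 s^{-1/2}(1+s)^{-3/2}$, $R^2-P^2=(A_*^2-B^2)+2bc+c^2>0$, etc.). Your approach buys a cleaner, one-pass argument with no two-stage contradiction, at the modest cost of the boundary and infinity analysis, which the paper's ray argument avoids by staying on the segment $t\in[0,1]$; the paper's key inequality \eqref{xByC} plays the role that your $\chi$-identity plays. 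Your closing observation is also correct and worth keeping in mind: the hypothesis \eqref{ABC2} is strictly weaker than $A\le\sh(\arsh B+\arsh C)$ (subadditivity of $\th$ goes the wrong way), so the lemma cannot be obtained from \eqref{aabbcc} of Lemma~\ref{lem_trigABC} by monotonicity in $A$, which is exactly why the paper gives it a separate proof.
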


\begin{proof}
Since the functions $t\mapsto t/\sqrt{1+t^2}$ and $t\mapsto\sqrt{t/(1+t)}$ are increasing on $[0,\infty)$, we can assume that the equality takes place in \eqref{ABC2}.

Consider the function
$$
F(x,y):=\, \sqrt {\frac{B^2+x^2}{1+B^2+x^2}}\,+\,\sqrt\frac{C^2+y^2}{1+C^2+y^2}\,-\,{\sqrt\frac{A^2+(x+y)^2}{1+A^2+(x+y)^2}}\,.
$$
We need to prove that $F(x,y)\ge 0$, $x$, $y\ge0$. We have $F(0,0)=0$. Assume that for some $x$, $y\ge0$, not equal to zero at the same time,
$F(x,y)<0$. Consider now the function $g(t)=F(tx,ty)$, $t\in[0,1]$. It is continuous and $g(0)=0$, $g(1)<0$. We will show that for small positive $t$, the inequality $g(t)>0$ holds.
Actually,
$$
g'(t)/t=\frac{x^2}{\sqrt{(B^2+t^2x^2)(1+B^2+t^2x^2)^3}}\,+\frac{y^2}{\sqrt{(C^2+t^2y^2)(1+C^2+t^2y^2)^3}}$$
$$
-\,\frac{(x+y)^2}{\sqrt{(A^2+t^2(x+y)^2)(1+A^2+t^2(x+y)^2)^3}}\,$$$$\to\frac{x^2}{B\sqrt{(1+B^2)^3}}\,+\frac{y^2}{C\sqrt{(1+C^2)^3}}\,-\frac{(x+y)^2}{A\sqrt{(1+A^2)^3}}\,,\quad\text{as}\quad  t\to 0.
$$
Now, we will show that
\begin{equation}\label{xByC}
\frac{x^2}{B\sqrt{(1+B^2)^3}}\,+\,\frac{y^2}{C\sqrt{(1+C^2)^3}}\,-\,\frac{(x+y)^2}{A\sqrt{(1+A^2)^3}}\,>0.
\end{equation}
Denote
$$
a_1=\frac{A}{\sqrt{1+A^2}}\,, \quad b_1=\frac{B}{\sqrt{1+B^2}}\,, \quad c_1=\frac{C}{\sqrt{1+C^2}}\,.
$$
Then $a_1=b_1+c_1<1$ and
$$
A=\frac{a_1}{\sqrt{1-a_1^2}}\,, \quad B=\frac{b_1}{\sqrt{1-b_1^2}}\,, \quad C=\frac{c_1}{\sqrt{1-c_1^2}}\,.
$$
$$
\sqrt{1+A^2}=\frac{1}{\sqrt{1-a_1^2}}\,, \quad \sqrt{1+B^2}=\frac{1}{\sqrt{1-b_1^2}}\,, \quad \sqrt{1+C^2}=\frac{1}{\sqrt{1-c_1^2}}\,.
$$
In this notation, the inequality \eqref{xByC}
can be written in the form
$$
\frac{x^2(1-b_1^2)^2}{b_1}\,+\,\frac{y^2(1-c_1^2)^2}{c_1}\,-\,\frac{(x+y)^2(1-a_1^2)^2}{a_1}\,>\,0.
$$
It is easy to prove that
$$
\frac{(x+y)^2}{a_1}\,=\,\frac{(x+y)^2}{b_1+c_1}\,\le \,\frac{x^2}{b_1}\,+\,\frac{y^2}{c_1}\,,
$$
and it is therefore sufficient to show that
$$
{{(1-a_1^2)^2}}< {(1-b_1^2)^2}\,, \quad {{(1-a_1^2)^2}}< {{(1-c_1^2)^2}},
$$
which follows from the fact that $a_1>b_1$ and $a_1>c_1$.

Since for small positive $t$, we have $g(t)>0$, and $g(1)<0$, we conclude that there exists $t_0\in (0,1)$ and $\varepsilon>0$ such that $g(t_0)=0$ and $g(t)<0$ on $(t_0,t_0+\varepsilon)$. Therefore, $g'(t_0)\le 0$. Denote $x_0=t_0x$,  $y_0=t_0y$. Then
$$
{\sqrt\frac{A^2+(x_0+y_0)^2}{1+A^2+(x_0+y_0)^2}}\,=\,\sqrt {\frac{B^2+x_0^2}{1+B^2+x_0^2}}\,+\,\sqrt\frac{C^2+y_0^2}{1+C^2+y_0^2}\,.
$$
By denoting
$$
{A_1}=\sqrt{A^2+(x_0+y_0)^2}\,, \quad {B_1}=\sqrt{B^2+x_0^2}\,,\quad {C_1}=\sqrt{C^2+y_0^2}\,,
$$
we have
$$
\frac{{A_1}}{\sqrt{1+A_1^2}}\,=\frac{{B_1}}{\sqrt{1+B_1^2}}\,+\frac{{C_1}}{\sqrt{1+C_1^2}}\,.
$$
$$
g'(t_0)/t_0
=\frac{x^2}{{B_1}\sqrt{(1+B_1^2)^3}}\,+\frac{y^2}{{C_1}\sqrt{(1+C_1^2)^3}}\,-\frac{(x+y)^2}{{A_1}\sqrt{(1+A_1^2)^3}}\,.
$$
Reasoning as above but by replacing $A$, $B$ and $C$ with ${A_1}$, ${B_1}$ and  ${C_1}$, we show that $g'(t_0)>0$. The contradiction proves the theorem.
\end{proof}

\begin{figure}[ht]
    \scalebox{0.3}{\includegraphics{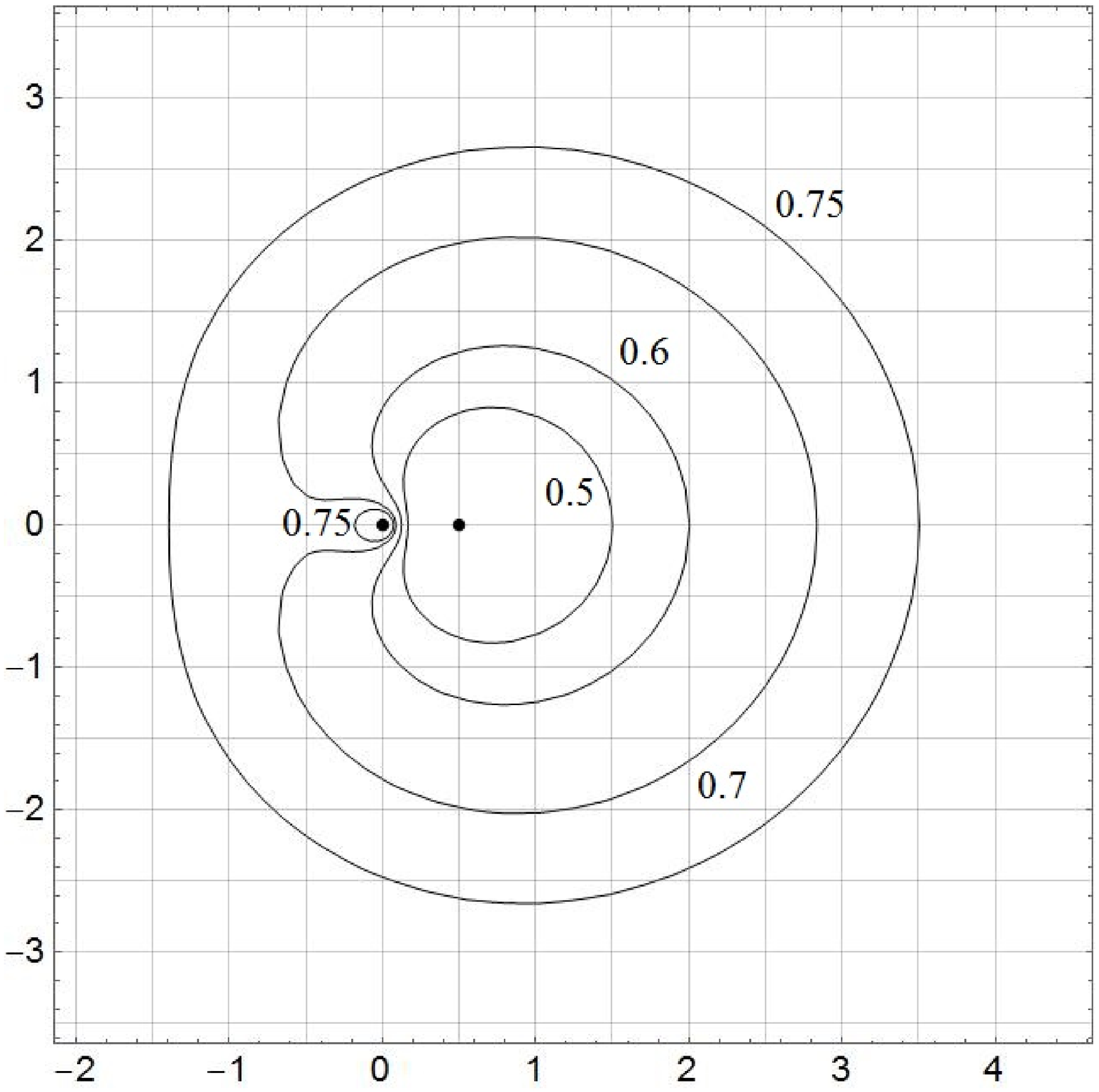}}
           \caption{Disks $\{x\in G\text{ }|\text{ }p^\alpha_G(x,0.5)<r\}$ with center $0.5$ and radii $r=0.5,0.6,0.7,0.75$ for the metric $p^\alpha_G$, $\alpha=3.5$, in the domain $G=\R^2\backslash\{0\}$.}
       \label{fig2}
\end{figure}

\begin{theorem}\label{pametr}
For a constant $\alpha>0$, the function
\begin{align*}
p^\alpha_{\R^n\backslash\{0\}}(x,y)=\frac{|x-y|}{\sqrt{|x-y|^2+\alpha|x|\,|y|}},
\quad
x,y\in\R^n\backslash\{0\},\, n\geq2,
\end{align*}
is a metric if and only if $\alpha\leq12$.
\end{theorem}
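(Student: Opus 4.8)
The plan is to mimic the structure of the proof of Theorem~\ref{thm_pInR0} (which is the case $\alpha=4$) and reduce the triangle inequality to the one-dimensional result Theorem~\ref{thm_alphappfR} via Lemma~\ref{style41}. For the \emph{necessity} direction, suppose $\alpha>12$; by Theorem~\ref{thm_alphappfR} the function $p^\alpha_{\R^+}$ is not a metric, and since it obviously satisfies positivity and symmetry, it is the triangle inequality that fails, say $p^\alpha_{\R^+}(s,t)>p^\alpha_{\R^+}(s,1)+p^\alpha_{\R^+}(1,t)$ for some $s,t>0$. Fixing a unit vector $e\in\R^n$ and putting $x=se$, $z=e$, $y=te$, one has $|x-y|=|s-t|$, $|x|=s$, $|y|=t$, $|z|=1$, so $p^\alpha_{\R^n\setminus\{0\}}$ restricted to the ray $\{te\,:\,t>0\}$ equals $p^\alpha_{\R^+}$; hence the triangle inequality for $p^\alpha_{\R^n\setminus\{0\}}$ fails at $x,z,y$ and the function is not a metric.

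For the \emph{sufficiency} direction, fix $\alpha\in(0,12]$; only the triangle inequality for $p^\alpha_G(x,y)=|x-y|/\sqrt{|x-y|^2+\alpha|x|\,|y|}$, $G=\R^n\setminus\{0\}$, needs proof. As in Theorem~\ref{thm_pInR0}, since $p^\alpha_G$ depends only on the pairwise distances of $x,y,z$ and on $|x|,|y|,|z|$, all invariant under orthogonal maps, I would assume $x,y,z,0\in\R^3$ and, by homogeneity, $|z|=1$. Writing $R=|x|$, $r=|y|$ and letting $2\varphi,2\psi,2\omega\in[0,\pi]$ be the angles at the origin between $Ox$ and $Oz$, between $Oz$ and $Oy$, and between $Ox$ and $Oy$, the law of cosines together with $\cos 2t=1-2\sin^2 t$ normalizes
\[
p^\alpha_G(x,y)^2=\frac{(R-r)^2+4Rr\sin^2\omega}{(R-r)^2+4Rr\sin^2\omega+\alpha Rr}=\frac{A^2+a^2}{1+A^2+a^2},
\]
with $A=|R-r|/\sqrt{\alpha Rr}$, $a=2\sin\omega/\sqrt{\alpha}$, and likewise $p^\alpha_G(x,z)^2=(B^2+b^2)/(1+B^2+b^2)$, $p^\alpha_G(z,y)^2=(C^2+c^2)/(1+C^2+c^2)$ with $B=|R-1|/\sqrt{\alpha R}$, $C=|1-r|/\sqrt{\alpha r}$, $b=2\sin\varphi/\sqrt\alpha$, $c=2\sin\psi/\sqrt\alpha$. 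Thus the triangle inequality is exactly the conclusion of Lemma~\ref{style41} for these $A,B,C,a,b,c$.

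It then remains to verify the two hypotheses of Lemma~\ref{style41}. The first, \eqref{ABC2}, unwinds to $A/\sqrt{1+A^2}=p^\alpha_{\R^+}(R,r)$, $B/\sqrt{1+B^2}=p^\alpha_{\R^+}(R,1)$ and $C/\sqrt{1+C^2}=p^\alpha_{\R^+}(1,r)$, so it is precisely the triangle inequality for $p^\alpha_{\R^+}$ at the triple $R,1,r$, which holds because $\alpha\le12$ (Theorem~\ref{thm_alphappfR}); this is the step that transports the threshold $12$ to the $n$-dimensional domain. The second, $a\le b+c$, says $\sin\omega\le\sin\varphi+\sin\psi$; it follows from $\omega\le\varphi+\psi$ (the triangular-angle inequality: each plane angle of a trihedral angle is at most the sum of the other two) and the elementary case split $\varphi+\psi\le\pi/2$ versus $\varphi+\psi>\pi/2$ already carried out in the proof of Theorem~\ref{thm_pInR0}. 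The degenerate configurations $R=r$, $R=1$ or $r=1$, where one of $A,B,C$ vanishes and Lemma~\ref{style41} does not apply verbatim, are handled by continuity from the generic case (or directly, using that $t\mapsto t/\sqrt{1+t^2}$ is increasing and concave on $[0,\infty)$). I expect the main obstacle to be purely organizational: choosing the normalization so that the parameter $\alpha$ is absorbed and \eqref{ABC2} becomes exactly the one-dimensional triangle inequality of Theorem~\ref{thm_alphappfR}; once that is arranged no new hard estimate is needed, the geometry being inherited from Theorem~\ref{thm_pInR0}.
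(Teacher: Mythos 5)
Your proposal is correct and follows essentially the same route as the paper's (deliberately brief) proof: replace \eqref{ABCdef} by the $\alpha$-normalized quantities \eqref{ABCdef1}, observe that \eqref{ABC2} is exactly the one-dimensional triangle inequality of Theorem~\ref{thm_alphappfR}, verify $a\le b+c$ by the same angle estimates as in Theorem~\ref{thm_pInR0}, and conclude via Lemma~\ref{style41}. Your explicit treatment of the necessity direction (restriction to a ray) and of the degenerate cases $A,B,C=0$ merely spells out details the paper leaves implicit.
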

\begin{proof}
We will only outline the proof because it is similar to that of Theorem~\ref{thm_pInR0} but, instead of \eqref{ABCdef}, we use the following values:
\begin{equation}\label{ABCdef1}
A=\frac{|R-r|}{\sqrt{\alpha Rr}}\,, \quad B=\frac{|R-1|}{\sqrt{\alpha R}}\,, \quad
C=\frac{|1-r|}{\sqrt{\alpha r}},
\end{equation}
By Theorem~\ref{thm_alphappfR}, such values satisfy \eqref{ABC2}. We also note that the parameters $a$, $b$ and $c$, considered in both the first and second parts of the proof of Theorem~\ref{thm_pInR0}, satisfy the inequality $0\le a
\le b+c$. Therefore, we can apply Lemma~\ref{style41} instead of Lemma~\ref{lem_trigABC}, to prove \eqref{ABC} and \eqref{ABC1} and  establish the triangle inequality.
\end{proof}

Now, we will study a generalization of the point pair function in the upper half-space $\uhp^n$.

\begin{theorem}\label{thm_mppfInH}
For a constant $\alpha>0$, the function
\begin{align}
p^\alpha_{\uhp^n}(x,y)=\frac{|x-y|}{\sqrt{|x-y|^2+\alpha x_ny_n}}\,,
\quad
x=(x_1,...,x_n),y=(y_1,...,y_n)\in\uhp^n,\, n\geq2,
\end{align}
is a metric if and only if $\alpha\leq12$.
\end{theorem}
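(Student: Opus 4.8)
The plan is to reduce the statement to the one-dimensional Theorem~\ref{thm_alphappfR}, exploiting that $p^\alpha_{\uhp^n}$ is invariant under all M\"obius self-maps of $\uhp^n$. Indeed, $p^\alpha_{\uhp^n}(x,y)$ depends on $x$ and $y$ only through the ratio $|x-y|^2/(x_ny_n)$, and since $|x-y|^2/(2x_ny_n)=\ch\rho_{\uhp^n}(x,y)-1$, where $\rho_{\uhp^n}$ denotes the hyperbolic metric of $\uhp^n$, this ratio — hence $p^\alpha_{\uhp^n}$ itself — is a M\"obius invariant. In particular, the restriction of $p^\alpha_{\uhp^n}$ to any vertical half-line $\{(c',t):t>0\}$ with $c'\in\R^{n-1}$ fixed is, under the identification $(c',t)\leftrightarrow t$, exactly $p^\alpha_{\R^+}$, because there $|x-y|=|t_x-t_y|$, $d_{\uhp^n}(x)=t_x$ and $d_{\uhp^n}(y)=t_y$.

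For the ``if'' part, fix $\alpha\le 12$; positivity and symmetry are clear, so I would only check the triangle inequality for three points $x,y,z\in\uhp^n$, and we may assume $x\ne y$. Using a M\"obius self-map of $\uhp^n$ I would send the geodesic through $x$ and $y$ to the $x_n$-axis and then rescale, so that $x=(0',r)$ and $y=(0',1/r)$ for some $r>1$, with $0'\in\R^{n-1}$; write $z=(\xi',\eta)$ with $\xi'\in\R^{n-1}$ and $\eta>0$. Since
\[
p^\alpha_{\uhp^n}(x,z)^2=1-\frac{\alpha r\eta}{|\xi'|^2+(r-\eta)^2+\alpha r\eta},\qquad p^\alpha_{\uhp^n}(z,y)^2=1-\frac{\alpha\eta/r}{|\xi'|^2+(1/r-\eta)^2+\alpha\eta/r},
\]
both $p^\alpha_{\uhp^n}(x,z)$ and $p^\alpha_{\uhp^n}(z,y)$ are non-decreasing in $|\xi'|$, whereas $p^\alpha_{\uhp^n}(x,y)$ does not depend on $z$. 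Hence it is enough to treat $\xi'=0'$, i.e.\ the case where $x,z,y$ all lie on the $x_n$-axis; identifying that axis with $\R^+$ as above, the inequality to prove becomes $p^\alpha_{\R^+}(r,1/r)\le p^\alpha_{\R^+}(r,\eta)+p^\alpha_{\R^+}(\eta,1/r)$, which holds for every $\eta>0$ because $p^\alpha_{\R^+}$ is a metric when $\alpha\le12$, by Theorem~\ref{thm_alphappfR}. This establishes the triangle inequality, so $p^\alpha_{\uhp^n}$ is a metric. (One could also finish, in the style of Theorems~\ref{thm_pInR0} and \ref{pametr}, by applying Lemma~\ref{style41} with $A=|r-1/r|/\sqrt\alpha$, $a=0$, $B=|r-\eta|/\sqrt{\alpha r\eta}$, $b=|\xi'|/\sqrt{\alpha r\eta}$, $C=|\eta-1/r|/\sqrt{\alpha\eta/r}$ and $c=|\xi'|/\sqrt{\alpha\eta/r}$, whose hypotheses $a\le b+c$ and \eqref{ABC2} reduce to the same one-variable inequality of Theorem~\ref{thm_alphappfR}.)

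For the ``only if'' part, suppose $\alpha>12$. By Theorem~\ref{thm_alphappfR} the function $p^\alpha_{\R^+}$ is not a metric, and since it trivially has the positivity and symmetry properties, there exist $a,b,c>0$ with $p^\alpha_{\R^+}(a,b)>p^\alpha_{\R^+}(a,c)+p^\alpha_{\R^+}(c,b)$. Taking $x=a e_n$, $z=c e_n$ and $y=b e_n$ in $\uhp^n$ and using the restriction property from the first paragraph, we get $p^\alpha_{\uhp^n}(x,y)>p^\alpha_{\uhp^n}(x,z)+p^\alpha_{\uhp^n}(z,y)$, so $p^\alpha_{\uhp^n}$ fails the triangle inequality.

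The only genuine ingredients are the M\"obius invariance and the ensuing normalization $x=(0',r)$, $y=(0',1/r)$: once those are in place, the monotonicity of the two summands in $|\xi'|$ collapses the whole $n$-dimensional problem onto a single geodesic, where Theorem~\ref{thm_alphappfR} applies verbatim. The main thing to be careful about is that this reduction is legitimate for the infimum over all admissible $z$ simultaneously — i.e.\ that for each fixed $\eta$ letting $|\xi'|\to0$ only decreases the right-hand side, for every $\eta$ — and that the M\"obius map used is indeed a self-map of $\uhp^n$; neither point is difficult.
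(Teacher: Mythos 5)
Your proof is correct and follows essentially the same route as the paper: both reduce the triangle inequality to the one-dimensional Theorem \ref{thm_alphappfR} by combining the M\"obius invariance of $p^\alpha_{\uhp^n}$ with the fact that orthogonally projecting $z$ onto the vertical geodesic through $x,y$ preserves its height while shrinking $|x-z|$ and $|z-y|$, and the ``only if'' part (points on the $x_n$-axis) is identical. The only difference is the order of the steps: you normalize $x,y$ to the $x_n$-axis by an $n$-dimensional M\"obius map and then let $|\xi'|\to 0$, whereas the paper first projects $z$ onto the two-plane through $x,y$ orthogonal to the boundary and then uses the classical invariance of $\Im u\,\Im v/|u-v|^2$ under automorphisms of $\uhp^2$; this is a cosmetic rearrangement rather than a genuinely different argument.
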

\begin{proof}
It is evident that this function fulfills the first two properties of a metric if $\alpha>0$ and, therefore, we only need to investigate the fulfillment of the triangle inequality.
If we consider some points $x=(0,0,\ldots,0,x_n)$ and $y=(0,0,\ldots,0,y_n)$ 
with $x_n$, $y_n>0$, then  $p^\alpha_{\uhp^n}(x,y)=p^\alpha_{\R^+}(x_n,y_n)$ and, from Theorem~\ref{thm_alphappfR}, it follows that the function $p^\alpha_{\uhp^n}$ is not a metric for $\alpha>12$,
since $p^\alpha_{\R^+}$ is not a metric for such $\alpha$.

Now, we will consider the case $\alpha\in (0,12]$ and prove that, in this case, $p^\alpha_{\uhp^n}$ is a metric. Let us fix some distinct points $x$, $y \in \uhp^n$. Consider a two-dimensional plane $\Pi$ in $\R^n$, which is orthogonal to the hyperplane $\{x_n=0\}$ and contains the points $x$ and $y$. If $x$ and $y$ do not lie on the same line, orthogonal to  $\{x_n=0\}$, then $\Pi$ is defined in a unique way; in the opposite case, we fix any of the possible planes. Now, we take any point $z\in \uhp^n$ and find its orthogonal projection $\widetilde{z}$ to $\Pi$. Since $|\widetilde{z}-x|\le|z-x|$, $|\widetilde{z}-y|\le|z-y|$ and $\widetilde{z}_n=z_n$, we obtain $p^\alpha_{\uhp^n}(x,z)+p^\alpha_{\uhp^n}(z,y)\ge p^\alpha_{\uhp^n}(x,\widetilde{z})+p^\alpha_{\uhp^n}(\widetilde{z},y)$. Therefore, we only need to prove that $p^\alpha_{\uhp^n}(x,\widetilde{z})+p^\alpha_{\uhp^n}(\widetilde{z},y)\ge p^\alpha_{\uhp^n}(x,y)$ for every three points $x$, $y$ and $\widetilde{z}$ lying in a two-dimensional plane $\Pi$.

Thus, without loss of generality we can assume that $n=2$ and the points $x$, $y$ and $z \in \uhp^2$ are complex numbers.

Consider the two following cases. \smallskip

1) If $\Re x=\Re y$, then we denote by  $L$  the line, orthogonal to the real axis and containing $x$ and $y$, and replace $z$ with its orthogonal projection $z'$ to $L$. Reasoning as above, we have $p^\alpha_{\uhp^2}(x,z)+p^\alpha_{\uhp^2}(z,y)\ge p^\alpha_{\uhp^2}(x,{z'})+p^\alpha_{\uhp^2}({z'},y)$, and, therefore, we can reduce the problem to the one-dimensional case. From Theorem~\ref{thm_alphappfR}, it follows that $p^\alpha_{\uhp^2}(x,{z'})+p^\alpha_{\uhp^2}({z'},y)=p^\alpha_{\R^+}(\Im x,{\Im z'})+p^\alpha_{\R^+}({\Im z'},\Im y)\ge p^\alpha_{\R^+}({\Im x},\Im y)=p^\alpha_{\uhp^2}(x,{y})$ and the triangle inequality is valid.\smallskip

2) If $\Re x\neq\Re y$, then we consider the circle $C$ containing the points $x$ and $y$ and orthogonal to the real axis. Let $C^+$ be the upper half of $C$. There exists a M\"obius transformation $T$ that maps $C^+$ onto the positive part of the imaginary axis. Consider the points $x_1=T(x)$, $y_1=T(y)$ and $z_1=T(z)$.
For every $u$, $v\in \uhp^2$, $u\neq v$, we have $p^\alpha_{\uhp^2}(u,v)=(1+\alpha t)^{-1/2}$ where $t=(\Im u\Im v)\slash(|u-v|^2)$. According to the well-known property of M\"obius automorphisms of $\uhp^2$,
$$
\frac{\Im u\Im v}{|u-v|^2}=\frac{\Im T(u)\Im T(v)}{|T(u)-T(v)|^2}\,,
$$
and we can conclude that $T$ preserves the value $p^\alpha_{\uhp^2}(u,v)$, i.e. $p^\alpha_{\uhp^2}(T(u),T(v))=p^\alpha_{\uhp^2}(u,v)$. Making use of this fact, we can replace $x$, $y$ and $z$ with $x_1$, $y_1$ and $z_1$; but for such points $\Re x_1=\Re y_1$ and the triangle inequality, therefore, follows from Case 1).
\end{proof}

It is interesting to study whether the point pair function defined as in \eqref{alpha4} becomes a metric, if we replace the constant $4$ with a smaller positive constant. The answer is negative, as proven below.

\begin{theorem}\label{thm_alphappfnotmetricInB}
The function
\begin{align*}
p^\alpha_{\B^n}(x,y)=\frac{|x-y|}{\sqrt{|x-y|^2+\alpha(1-|x|)(1-|y|)}}\,,
\quad
x,y\in\B^n,\, n\geq1,
\end{align*}
is not a metric for any constant $\alpha>0$.
\end{theorem}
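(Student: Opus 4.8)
The plan is to disprove the triangle inequality directly, by exhibiting, for each $\alpha>0$, three collinear points of $\B^n$ that violate it. Since $p^\alpha_{\B^n}$ is invariant under rotations about the origin and trivially satisfies positivity and symmetry, the triangle inequality is the only property at stake, and it suffices to work along a single diameter. First I would fix a unit vector $e\in\R^n$ and a parameter $t\in(0,1)$ to be chosen, and take the distinct points $x=-te$, $z=0$, $y=te$ in $\B^n$. These satisfy $|x-z|=|z-y|=t$, $|x-y|=2t$, $d_{\B^n}(x)=d_{\B^n}(y)=1-t$, and $d_{\B^n}(z)=1$; the idea behind this configuration is that the base point $z$ lies at the centre, where the boundary distance is largest, while $d_{\B^n}(x)$ and $d_{\B^n}(y)$ can be made as small as we wish.

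Next I would substitute these values into \eqref{ppfalpha}. By symmetry $p^\alpha_{\B^n}(x,z)=p^\alpha_{\B^n}(z,y)$, so the triangle inequality $p^\alpha_{\B^n}(x,y)\le p^\alpha_{\B^n}(x,z)+p^\alpha_{\B^n}(z,y)$ reads
$$
\frac{2t}{\sqrt{4t^2+\alpha(1-t)^2}}\ \le\ \frac{2t}{\sqrt{t^2+\alpha(1-t)}}\,.
$$
Since $t>0$, both sides are positive, and this is equivalent to $t^2+\alpha(1-t)\le 4t^2+\alpha(1-t)^2$, i.e. to $3t^2\ge\alpha(1-t)-\alpha(1-t)^2=\alpha\,t(1-t)$, i.e., dividing by $t$, to $3t\ge\alpha(1-t)$, i.e. to $t\ge\alpha/(\alpha+3)$. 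Hence the triangle inequality \emph{fails} for every $t\in\bigl(0,\alpha/(\alpha+3)\bigr)$, an interval that is non-empty for each $\alpha>0$, so $p^\alpha_{\B^n}$ is a metric for no value of $\alpha$. (Taking $\alpha=4$ and $t=1/3$ recovers the observation of \cite[Rmk 3.1, p. 689]{chkv}.)

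There is essentially no obstacle once the configuration is fixed: the verification above is a one-line manipulation. The only point requiring a little care is to choose a family of triples that provably violates the inequality for \emph{all} $\alpha>0$ simultaneously, and the symmetric triple on a diameter does this cleanly, since the resulting condition $t<\alpha/(\alpha+3)$ can always be satisfied by taking $t$ small. One could instead allow the two outer points to sit at different distances from the centre, but this adds nothing.
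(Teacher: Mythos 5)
Your proposal is correct and is essentially identical to the paper's own proof: the paper also takes the symmetric triple $x=ke_1$, $y=-ke_1$, $z=0$ and shows the triangle inequality fails precisely when $k<\alpha/(3+\alpha)$, then picks $k=\alpha/(4+\alpha)$. Your computation and conclusion match, with only the cosmetic difference that you keep $t$ as a free small parameter instead of fixing one admissible value.
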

\begin{proof}
Fix points $x=ke_1$, $y=-ke_1$, and $z=0$, where $0<k<1$ and $e_1$ is the first unit vector. Then we have
\begin{multline*}
p^\alpha_{\B^n}(x,y)> p^\alpha_{\B^n}(x,z)+p^\alpha_{\B^n}(z,y)
\quad\Leftrightarrow\quad
\frac{p^\alpha_{\B^n}(x,y)}{p^\alpha_{\B^n}(x,z)+p^\alpha_{\B^n}(z,y)}=
\sqrt{\frac{k^2+\alpha(1-k)}{4k^2+\alpha(1-k)^2}}>1\\
\Leftrightarrow\quad
k^2+\alpha(1-k)>4k^2+\alpha(1-k)^2
\quad\Leftrightarrow\quad
-3k+\alpha(1-k)>0
\quad\Leftrightarrow\quad
k<\alpha/(3+\alpha).
\end{multline*}
Consequently, if we put $x=ke_1$, $y=-ke_1$, $k=\alpha\slash(4+\alpha)$ and $z=0$, then the triangle inequality does not hold.
\end{proof}


\section{Open questions}

On the base of numerical tests, we propose the following conjectures.

\begin{conjecture}\label{conj_ppfRminusB}
For a constant $\alpha>0$, the function
\begin{align}
p^\alpha_{\R^n\backslash\overline{\B}^n}(x,y)=\frac{|x-y|}{\sqrt{|x-y|^2+\alpha(|x|-1)(|y|-1)}}\,,
\quad
x,y\in\R^n\backslash\overline{\B}^n,\,n\geq2,
\end{align}
is a metric if and only if $\alpha\leq12$.
\end{conjecture}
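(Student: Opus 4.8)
The plan is to prove the two implications separately, the first being routine and the second being the substantial one.

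\emph{If $\alpha>12$, then $p^\alpha_G$ is not a metric.} By Theorem~\ref{thm_alphappfR}, $p^\alpha_{\R^+}$ is not a metric for such $\alpha$, so there exist $s,t,w>0$ with $p^\alpha_{\R^+}(s,w)>p^\alpha_{\R^+}(s,t)+p^\alpha_{\R^+}(t,w)$. Fix a unit vector $e_1$ and put $x=(1+s)e_1$, $y=(1+w)e_1$, $z=(1+t)e_1$. These points lie in $G=\R^n\setminus\overline{\B}^n$, and since $d_G\big((1+\tau)e_1\big)=\tau$ and $\big|(1+\tau_1)e_1-(1+\tau_2)e_1\big|=|\tau_1-\tau_2|$, we get $p^\alpha_G(x,y)=p^\alpha_{\R^+}(s,w)$, $p^\alpha_G(x,z)=p^\alpha_{\R^+}(s,t)$, $p^\alpha_G(z,y)=p^\alpha_{\R^+}(t,w)$, so the triangle inequality already fails on this collinear triple.

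\emph{If $\alpha\le12$, then $p^\alpha_G$ is a metric.} Here I would follow the structure of Theorems~\ref{thm_pInR0} and \ref{pametr}; only the triangle inequality needs proof. Since $p^\alpha_G$ and Euclidean distances are rotation invariant about the origin $O=0$, fix $x,y,z\in G$ and pass to the subspace through $0,x,y,z$; as in the proof of Theorem~\ref{thm_pInR0} this reduces the problem to $n\le 3$, the case $n=1$ being trivial, so take $n\in\{2,3\}$. Write $R=|x|$, $r=|y|$, $\rho=|z|$ (all $>1$), and let $2\phi=\angle xOz$, $2\psi=\angle zOy$, $2\chi=\angle xOy$. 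By the law of cosines and $|x-y|^2=(R-r)^2+4Rr\sin^2\chi$ one has
\[
p^\alpha_G(x,y)^2=\frac{\tilde A^2+a^2}{1+\tilde A^2+a^2},\qquad \tilde A=\frac{|R-r|}{\sqrt{\alpha(R-1)(r-1)}},\qquad a=\frac{2\sin\chi}{\sqrt{\alpha}}\sqrt{\frac{R}{R-1}}\sqrt{\frac{r}{r-1}},
\]
and analogous identities with $(\tilde B,b)$ built from $R,\rho,\phi$ and $(\tilde C,c)$ built from $\rho,r,\psi$, so the triangle inequality is equivalent to
\[
\sqrt{\frac{\tilde A^2+a^2}{1+\tilde A^2+a^2}}\ \le\ \sqrt{\frac{\tilde B^2+b^2}{1+\tilde B^2+b^2}}\ +\ \sqrt{\frac{\tilde C^2+c^2}{1+\tilde C^2+c^2}}.
\]
Two ingredients are available. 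First, $\tilde A/\sqrt{1+\tilde A^2}=p^\alpha_{\R^+}(R-1,r-1)$ and likewise for $\tilde B,\tilde C$, so the case $a=b=c=0$ is exactly the triangle inequality for $p^\alpha_{\R^+}$ at the triple $(R-1,\rho-1,r-1)$, which holds by Theorem~\ref{thm_alphappfR} since $\alpha\le12$. Second, exactly as in the proof of Theorem~\ref{thm_pInR0}, the plane angles of the trihedral angle at $O$ satisfy $\chi\le\phi+\psi$, hence $\sin\chi\le\sin\phi+\sin\psi$.

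\emph{The main obstacle.} In Theorem~\ref{pametr} these two ingredients are fused through Lemma~\ref{style41}, because there the angular data satisfy $a\le b+c$. That step fails here: the factors $\sqrt{R/(R-1)}$, $\sqrt{r/(r-1)}$, $\sqrt{\rho/(\rho-1)}$ do not cancel, and $a\le b+c$ genuinely fails when $\rho$ is large and $R,r$ are close to $1$ — even though the triangle inequality for $p^\alpha_G$ itself persists there, since then $p^\alpha_G(x,z),p^\alpha_G(z,y)\to1$ while $p^\alpha_G(x,y)<1$. Closing the proof thus requires a sharper substitute for Lemma~\ref{style41}, in which the hypothesis $a\le b+c$ is replaced by a weighted condition coupling $(a,b,c)$ to $(\tilde A,\tilde B,\tilde C)$ through the common radii $R,r,\rho$ (and which does hold in our setting); alternatively, one abandons the dimension reduction and analyzes $\Phi(z)=p^\alpha_G(x,z)+p^\alpha_G(z,y)$ directly in the spirit of Lemma~\ref{lem_ineqppf}: as $|z|\to1^+$ or $|z|\to\infty$ one has $p^\alpha_G(x,z),p^\alpha_G(z,y)\to1$, so $\Phi(z)\to2>p^\alpha_G(x,y)$, and it remains to rule out an interior critical point of $\Phi$ with value below $p^\alpha_G(x,y)$, the gradient equations reducing this — I expect — to a one- or two-variable polynomial inequality in which the threshold $\alpha=12$ enters precisely as in \eqref{3t2}. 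The hard part, where the value $12$ must really be used, is controlling the regime $\rho\gg R,r\approx1$ in which the elementary estimate breaks down.
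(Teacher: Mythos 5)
First, note that the statement you are proving is stated in the paper only as Conjecture~\ref{conj_ppfRminusB} in the section of open questions, supported by numerical tests; the paper contains no proof of it, so there is nothing to compare your argument against on the paper's side. Judged on its own, your proposal establishes only the easy half. The necessity direction is correct and complete: restricting to a ray through the origin, $d_G((1+\tau)e_1)=\tau$ for $G=\R^n\setminus\overline{\B}^n$, so $p^\alpha_G$ on that ray coincides with $p^\alpha_{\R^+}$ in the shifted variable, and the failure of the triangle inequality for $\alpha>12$ from Theorem~\ref{thm_alphappfR} transfers verbatim.

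The sufficiency direction ($\alpha\le12$ implies the triangle inequality), which is the entire substance of the conjecture, is not proved. Your reduction to the three quantities $\tilde A,\tilde B,\tilde C$ and the weighted angular terms $a,b,c$ is correct, and you correctly diagnose why the scheme of Theorems~\ref{thm_pInR0} and \ref{pametr} breaks: for the punctured space the weights $\sqrt{R}$, $\sqrt{r}$, $\sqrt{\rho}$ cancel against $d_G=|\cdot|$, giving $a\le b+c$ and allowing Lemma~\ref{style41}, whereas here $d_G(x)=|x|-1$ leaves factors $\sqrt{R/(R-1)}$ etc.\ that destroy the inequality $a\le b+c$ precisely when $R,r\to1^+$ and $\rho$ is large. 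But having identified the obstacle, you only gesture at two possible repairs (a weighted substitute for Lemma~\ref{style41}, or a direct critical-point analysis of $\Phi(z)=p^\alpha_G(x,z)+p^\alpha_G(z,y)$) without formulating, let alone proving, either; phrases such as ``I expect'' and ``it remains to rule out an interior critical point'' mark exactly the step where the constant $12$ would have to do its work. So the proposal is an honest partial reduction plus a proof of the easy implication, not a proof of the conjecture; the regime $\rho\gg1$, $R,r\approx1$ that you flag is a genuine gap that still has to be closed.
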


\begin{remark}
Since the function $x\mapsto x/|x|^2$ maps $\R^n\backslash\overline{\B}{}^n$, $n\geq2$, onto the domain $G=\B^n\setminus\{0\}$ and vice versa, it follows that if Conjecture \ref{conj_ppfRminusB} holds, then the quasimetric
\begin{align}\label{ppfInversionDisk}
\psi^\alpha_G(x,y)=
p^\alpha_{\R^n\backslash\overline{\B}^n}\left(\frac{x}{|x|^2},\frac{y}{|y|^2}\right)=
\frac{|x-y|}{\sqrt{|x-y|^2+\alpha|x|\,|y|(1-|x|)(1-|y|)}},
\quad x,y\in G,
\end{align}
is a metric on $G$ if and only if $\alpha\in(0,12]$. See Figure \ref{fig3} for the  `disks' of this function $\psi^\alpha_G$.
\end{remark}


\begin{figure}[ht]
    \scalebox{0.7}{\includegraphics{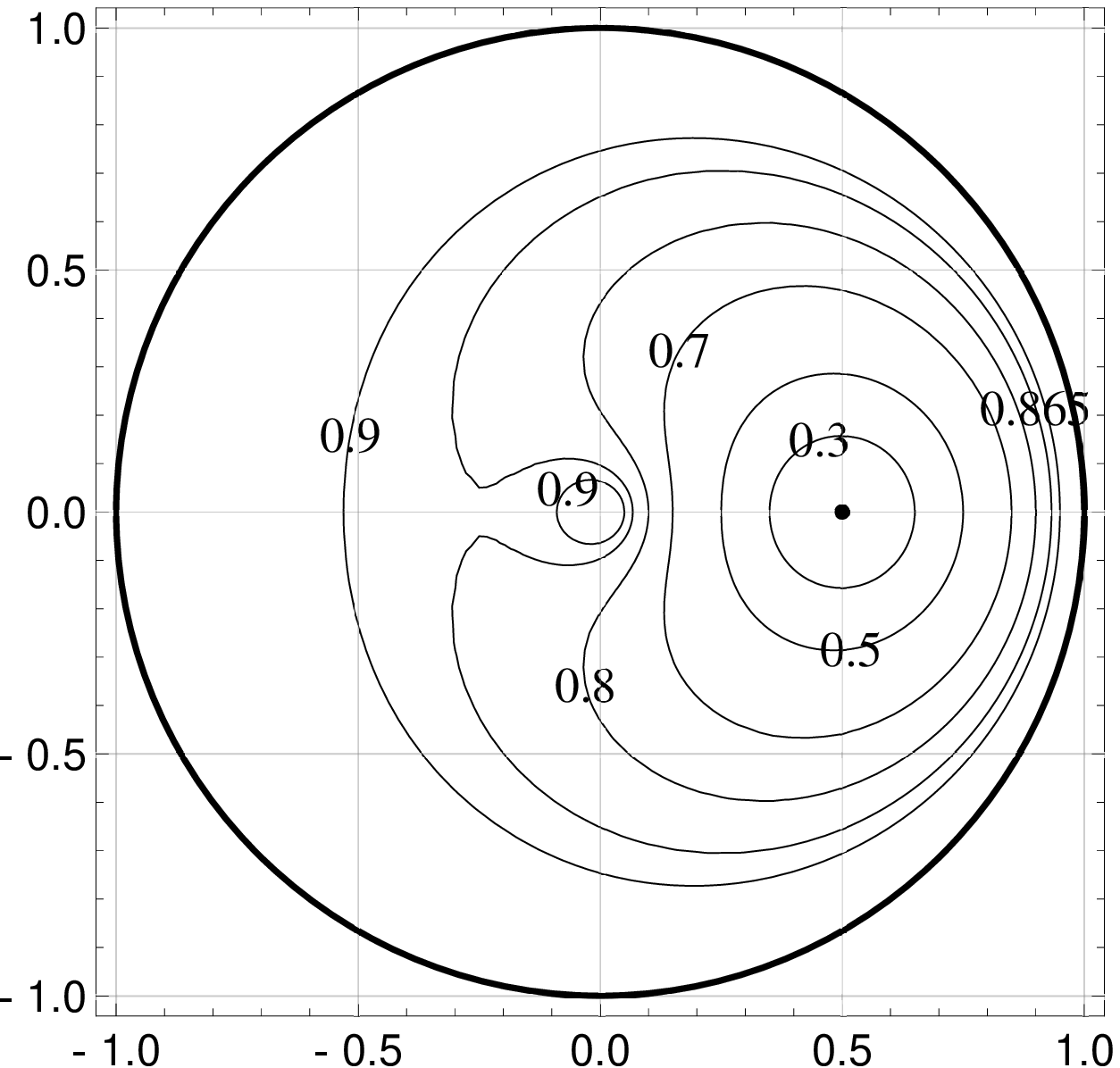}}
           \caption{Disks $\{x\in G\text{ }|\text{ }\psi^\alpha_G(x,0.5)<r\}$ with center $0.5$ and radii $r=0.3,0.5,0.7,0.8,0.865,0.9$ for the function $\psi^\alpha_G$, $\alpha=4$, and $G=\B^2\setminus\{0\}$, in the unit disk $\B^2$.}
       \label{fig3}
\end{figure}

\begin{conjecture}
The point pair function $p_G$ is a metric on the domain $G=\R^3\backslash Z$, where $Z$ is the $z$-axis.
\end{conjecture}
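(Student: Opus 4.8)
The plan is to reduce the three-dimensional problem to the two-dimensional one that has essentially already been solved, by exploiting the rotational symmetry of $G=\R^3\setminus Z$ about the $z$-axis. For a point $x=(x_1,x_2,x_3)\in G$ write $x=(x',x_3)$ with $x'\in\R^2\setminus\{0\}$; then $d_G(x)=|x'|$, the distance to the $z$-axis. Fix three distinct points $x,y,z\in G$. The first step is to observe that, for the purpose of the triangle inequality $p_G(x,y)\le p_G(x,z)+p_G(z,y)$, only the ``cylindrical radii'' $d_G(x)=|x'|$, $d_G(y)=|y'|$, $d_G(z)=|z'|$ and the Euclidean distances $|x-y|$, $|x-z|$, $|z-y|$ enter, and the left-hand side $p_G(x,y)$ depends only on $|x-y|$ and on $d_G(x)d_G(y)$. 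Since $p_G(x,z)+p_G(z,y)$ is increasing in $|x-z|$ and $|z-y|$ and the map $t\mapsto t/\sqrt{t^2+\gamma^2}$ is increasing on $\R_+$, it suffices to bound the sum from below after possibly \emph{decreasing} $|x-z|$ and $|z-y|$ while keeping $d_G(x)$, $d_G(y)$, $d_G(z)$ and $|x-y|$ fixed. The natural way to do this is to rotate $x$ and $y$ about the $z$-axis into a common half-plane $\Pi$ (a meridian half-plane, say $\{(t,0,s):t\ge 0\}$), and then to replace $z$ by a point $z^*\in\Pi$ with the same $z$-coordinate and the same cylindrical radius $|z'|$: the radial component of $z$ can always be rotated into $\Pi$ so that both $|x-z|$ and $|z-y|$ do not increase. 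After this reduction all three points lie in the closed meridian half-plane $\Pi$, which we identify with the closed upper half-plane $\overline{\uhp^2}$ with the $z$-axis playing the role of the boundary line $\{x_n=0\}$, and $d_G$ restricted to $\Pi$ is exactly the distance to that boundary line.

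The second step is then to invoke Theorem~\ref{thm_mppfInH} with $\alpha=4$: the point pair function $p_{\uhp^2}$ is a metric on the upper half-plane (indeed $p_{\uhp^2}=s_{\uhp^2}$). The subtlety is that our three points may lie \emph{on} the boundary line $Z$ in the ambient sense only if they were originally in $Z$, which they are not; but after the rotation and projection described above the reduced points $\widetilde x,\widetilde y,\widetilde z$ have strictly positive distance to $Z$ precisely when the originals do, so they lie in the open half-plane $\uhp^2$ and Theorem~\ref{thm_mppfInH} applies directly. One must check carefully that the reduction genuinely produces points in the same meridian half-plane with the claimed monotonicity of distances; this is a short computation with the law of cosines analogous to the one carried out in the proof of Theorem~\ref{thm_mppfInH} (where the two-dimensional reduction from $\uhp^n$ to $\uhp^2$ was done by orthogonal projection onto a plane orthogonal to the boundary hyperplane). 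Combining, we get
$$
p_G(x,y)=p_{\uhp^2}(\widetilde x,\widetilde y)\le p_{\uhp^2}(\widetilde x,\widetilde z)+p_{\uhp^2}(\widetilde z,\widetilde y)\le p_G(x,z)+p_G(z,y),
$$
which is the desired triangle inequality.

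The main obstacle is the reduction step, specifically showing that one can simultaneously move $z$ into the meridian half-plane $\Pi$ containing the rotated $x$ and $y$ without increasing either $|x-z|$ or $|z-y|$. When $x$ and $y$ lie in a common meridian half-plane there is a one-parameter family of rotations fixing that half-plane only if $x$ and $y$ are both on the $z$-axis; generically $\Pi$ is unique, and rotating $z$ about the $z$-axis changes $|x-z|$ and $|z-y|$ in a coupled way, so it is not obvious that a single rotation decreases both. The resolution is that we need not rotate $z$ by the \emph{same} angle; rather, after fixing $\Pi$, we replace $z$ by the point $z^*\in\Pi$ with $z^*_3=z_3$ and $|(z^*)'|=|z'|$ lying on the \emph{same side} as appropriate — and then argue, using convexity of squared distance as a function of the angular coordinate of $z$, that the minimum of $|x-z|^2$ over the orbit circle is attained at a point of $\Pi$, and likewise for $|z-y|^2$; when these minima are attained at different points of $\Pi$ one uses that the relevant expressions are monotone between them, together with the fact that $x$ and $y$ are on the same side of $\Pi$, to conclude that some intermediate point of $\Pi$ works. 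This is precisely the kind of planar reduction argument used repeatedly in the paper (compare the passage to $\R^3$ in the proof of Theorem~\ref{thm_pInR0} and the projection argument in Theorem~\ref{thm_mppfInH}), so it should go through, but writing it out rigorously is where the real work lies; one might also phrase it more cleanly by first reducing to the case where $x$, $y$, $z$ all lie in a common two-dimensional plane containing the $z$-axis direction and then observing that such a plane meets every cylinder $\{|x'|=\text{const}\}$ in two lines parallel to $Z$, reducing everything to $\uhp^2$ with a possibly reflected copy of $z$.
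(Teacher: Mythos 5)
This statement is one of the paper's open problems (Section 6), so there is no proof in the paper to compare against; your proposal has to stand on its own, and as written it does not, because the key reduction goes in the wrong direction. You correctly state the requirement for a reduction: keep $d_G(x)$, $d_G(y)$, $d_G(z)$ and $|x-y|$ fixed (or at least do not decrease $|x-y|$, do not decrease $d_G(z)$) while not increasing $|x-z|$ and $|z-y|$. But the move you then perform, rotating $x$ and $y$ separately about $Z$ into a common meridian half-plane $\Pi$, violates this: in cylindrical coordinates $|x-y|^2=(r_1-r_2)^2+4r_1r_2\sin^2\bigl(\tfrac{\theta_1-\theta_2}{2}\bigr)+(h_1-h_2)^2$, which is \emph{minimized} when the azimuths coincide, so the flattening strictly decreases $|x-y|$ whenever $\theta_1\neq\theta_2$ and hence strictly decreases the left-hand side $p_G(x,y)$. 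Your chain then only yields $p_G(\widetilde x,\widetilde y)\le p_G(x,z)+p_G(z,y)$ with $p_G(\widetilde x,\widetilde y)\le p_G(x,y)$, which does not give the triangle inequality for the original points. The step you single out as the "main obstacle" (moving $z$ alone into $\Pi$) is in fact the unproblematic one: if $x,y\in\Pi$, rotating $z$ to the same azimuth decreases both $|x-z|$ and $|y-z|$ and preserves $d_G(z)$, and then $p_G$ restricted to $\Pi$ is exactly $p_{\uhp^2}$, so Theorem~\ref{thm_mppfInH} (or $p_{\uhp^2}=s_{\uhp^2}$) applies. Thus your argument proves the conjecture only in the special case where $x$ and $y$ already lie in a common half-plane bounded by $Z$; the case of positive azimuthal separation, which is the whole difficulty, is untouched.

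The alternative "cleaner" reduction you sketch also fails: a two-dimensional plane containing the $z$-direction but not the axis lies at some distance $\delta>0$ from $Z$, and a point of it with in-plane coordinates $(s,h)$ has $d_G=\sqrt{\delta^2+s^2}$, which is not the distance to a boundary line of that plane, so the restriction of $p_G$ to such a plane is not a half-plane point pair function and Theorem~\ref{thm_mppfInH} cannot be invoked. What the general case actually needs, following the pattern of Theorem~\ref{thm_pInR0}, is a three-parameter analogue of Lemmas~\ref{lem_trigABC} and \ref{style41}: with $A=|r_1-r_2|/(2\sqrt{r_1r_2})$, $a$ the sine of half the azimuthal angle, and $H=|h_1-h_2|/(2\sqrt{r_1r_2})$, the inequality to prove is of the form
\begin{equation*}
\sqrt{\frac{A^2+a^2+H^2}{1+A^2+a^2+H^2}}\;\le\;\sqrt{\frac{B^2+b^2+K^2}{1+B^2+b^2+K^2}}\;+\;\sqrt{\frac{C^2+c^2+L^2}{1+C^2+c^2+L^2}},
\end{equation*}
and the normalized heights $H,K,L$ do not satisfy the subadditivity enjoyed by $a,b,c$ and $A,B,C$ (for instance $|h_1-h_3|/\sqrt{r_1r_3}+|h_3-h_2|/\sqrt{r_3r_2}$ can be far smaller than $|h_1-h_2|/\sqrt{r_1r_2}$ when $r_3$ is large), so the interaction of the radial, angular and vertical parts requires a genuinely new argument; this is precisely why the statement is posed as a conjecture.
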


\begin{remark}
From the proof of Theorem \ref{thm_alphappfnotmetricInB} it follows that, for $\alpha>0$, the generalized version $p^\alpha_{\B^n}$ of the point pair function can only be a quasi-metric in the unit ball with a constant $c(\alpha)$ that has the following lower bound:
\begin{align}\label{ine_c_alphakformula}
c(\alpha)\geq\sup_{0<k<1}\sqrt{\frac{k^2+\alpha(1-k)}{4k^2+\alpha(1-k)^2}}.
\end{align}
By differentiation, we have
\begin{align*}
\frac{\partial}{\partial k}\left(\frac{k^2+\alpha(1-k)}{4k^2+\alpha(1-k)^2}\right)
=\frac{\alpha((\alpha+2)k^2-2(\alpha+3)k+\alpha)}{(4k^2+\alpha(1-k)^2)^2}=0\\
\Leftrightarrow\quad
(\alpha+2)k^2-2(\alpha+3)k+\alpha=0
\quad\Leftrightarrow\quad
k=\frac{\alpha+3\pm\sqrt{4\alpha+9}}{\alpha+2}\,.
\end{align*}
It can be shown that the square-root expression on the right hand side of the inequality \eqref{ine_c_alphakformula} obtains its maximum with respect to $k\in(0,1)$ at the point
\begin{align*}
k=\frac{\alpha+3-\sqrt{4\alpha+9}}{\alpha+2}\in(0,1).
\end{align*}
Consequently, the inequality \eqref{ine_c_alphakformula} can be simplified to $c(\alpha)\ge c_*(\alpha)$ where
\begin{align*}
c_*(\alpha)=\sqrt{\frac{(\alpha+3-\sqrt{4\alpha+9})^2+\alpha(\alpha+2)(\sqrt{4\alpha+9}-1)}{4(\alpha+3-\sqrt{4\alpha+9})^2+\alpha(1-\sqrt{4\alpha+9})^2}}\,.
\end{align*}
\end{remark}

\begin{conjecture}
For $\alpha>0$, the function
\begin{align*}
p^\alpha_{\B^n}(x,y)=\frac{|x-y|}{\sqrt{|x-y|^2+\alpha(1-|x|)(1-|y|)}}\,,
\quad
x,y\in\B^n,\, n\geq1,
\end{align*}
is a quasi-metric with the sharp constant $c_*(\alpha)$
or, equivalently, $c_*(\alpha)$ defined as above is the best of constants $c(\alpha)$ depending only on the value of $\alpha$ such that the inequality
\begin{align*}
p^\alpha_{\B^n}(x,y)\leq c(\alpha)(p^\alpha_{\B^n}(x,z)+p^\alpha_{\B^n}(z,y))
\end{align*}
holds for all $x$, $y$, $z\in\B^n$ and $n\geq1$.
\end{conjecture}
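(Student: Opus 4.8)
The lower bound was already obtained in the Remark above: for $x=ke_1$, $z=0$, $y=-ke_1$ with $k=(\alpha+3-\sqrt{4\alpha+9})/(\alpha+2)$ one has $p^\alpha_{\B^n}(x,y)=c_*(\alpha)\bigl(p^\alpha_{\B^n}(x,z)+p^\alpha_{\B^n}(z,y)\bigr)$. So it remains to prove $p^\alpha_{\B^n}(x,y)\le c_*(\alpha)\bigl(p^\alpha_{\B^n}(x,z)+p^\alpha_{\B^n}(z,y)\bigr)$ for all $x,y,z\in\B^n$, and the plan is to imitate the proof of Theorem~\ref{thm_ppfquasiconstant} in order to reduce this to a version of Lemma~\ref{lem_ineqppf} with the constant $4$ replaced by an arbitrary $\alpha>0$.

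For the reduction to dimension one, assume $x,y\neq0$ (the remaining cases follow by continuity) and put $u=x/|x|$, $v=y/|y|\in\partial\B^n$, so that $|x-u|=d_{\B^n}(x)$, $|y-v|=d_{\B^n}(y)$, while $G_1=\R^n\backslash\{u,v\}$ satisfies $\B^n\subset G_1$, $d_{G_1}(x)=d_{\B^n}(x)$, $d_{G_1}(y)=d_{\B^n}(y)$ and $d_{G_1}(z)\ge d_{\B^n}(z)$; hence $p^\alpha_{\B^n}(x,y)=p^\alpha_{G_1}(x,y)$ and $p^\alpha_{G_1}(x,z)+p^\alpha_{G_1}(z,y)\le p^\alpha_{\B^n}(x,z)+p^\alpha_{\B^n}(z,y)$. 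Now, exactly as in Theorem~\ref{thm_ppfquasiconstant}, with $a=|x-z|$, $b=|z-y|$, $\rho=d_{\B^n}(x)$, $r=d_{\B^n}(y)$, form the segment $\Delta$ with endpoints $\widetilde u=-a-\rho$, $\widetilde v=b+r$ and the points $\widetilde x=-a$, $\widetilde z=0$, $\widetilde y=b$; the triangle inequality in $\R^n$ (it gives $\rho=1-|x|\le|x-y|+1-|y|\le a+b+r$, and symmetrically) yields $d_\Delta(\widetilde x)=\rho$, $d_\Delta(\widetilde y)=r$, $d_\Delta(\widetilde z)\ge d_{G_1}(z)$ and $|\widetilde x-\widetilde y|=a+b\ge|x-y|$, whence, since $t\mapsto t/\sqrt{t^2+\gamma^2}$ increases, the quotient $p^\alpha_{G_1}(x,y)/(p^\alpha_{G_1}(x,z)+p^\alpha_{G_1}(z,y))$ is bounded by $p^\alpha_\Delta(\widetilde x,\widetilde y)/(p^\alpha_\Delta(\widetilde x,\widetilde z)+p^\alpha_\Delta(\widetilde z,\widetilde y))$. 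Rescaling $\Delta$ to $(-1,1)$, and noting that a triple on a diameter of $\B^n$ shows the two problems to be in fact equivalent, the conjecture is reduced to showing that $p^\alpha_{(-1,1)}$ is a quasi-metric with constant $c_*(\alpha)$.

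For the interval I would run the reductions of Theorem~\ref{thm_quasi1disk}. If the three points lie on one side of $0$, the configuration is affinely a configuration in $\R^+$; the bound is $1\le c_*(\alpha)$ when $\alpha\le12$ by Theorem~\ref{thm_alphappfR}, and for $\alpha>12$ it becomes the assertion that the quasi-metric constant of $p^\alpha_{\R^+}$ does not exceed $c_*(\alpha)$, a one-variable problem to be settled by the critical-point method of Theorem~\ref{thm_alphappfR}. Otherwise, using monotonicity in $z$ and the symmetry $w\mapsto-w$, one reduces to proving $F_\alpha(x,z,y):=p^\alpha_{(-1,1)}(x,z)+p^\alpha_{(-1,1)}(z,y)-c_*(\alpha)^{-1}p^\alpha_{(-1,1)}(x,y)\ge0$ for $-1\le x\le0\le z\le y\le1$. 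The partial derivatives of $F_\alpha$ have the same form as in Lemma~\ref{lem_ineqppf} (only the radicals no longer simplify), so a similar elimination should show that an interior critical point forces $z=y$; the faces $x=-1$, $z=y$, $y=1$ give $F_\alpha\ge0$ at once, the face $x=0$ reduces again to the $\R^+$ bound, and on the face $z=0$, writing $\Phi(x,y):=p^\alpha_{(-1,1)}(x,0)+p^\alpha_{(-1,1)}(0,y)-c_*(\alpha)^{-1}p^\alpha_{(-1,1)}(x,y)$, one observes that $\Phi$ is symmetric under $(x,y)\mapsto(-y,-x)$, that $\Phi\ge0$ along the edges of the face, and that on the diagonal $y=-x$, with $k=-x$, the inequality $\Phi\ge0$ is exactly $c_*(\alpha)^2\ge(k^2+\alpha(1-k))/(4k^2+\alpha(1-k)^2)$, valid by the definition of $c_*(\alpha)$, with equality at the critical value of $k$. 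Hence everything comes down to showing that $\Phi$ attains its minimum over the face $z=0$ on this diagonal.

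This last step is the main obstacle. At $\alpha=4$ the radical collapses, $\sqrt{x^2+\alpha(1+x)}=\sqrt{(x+2)^2}=x+2$ (its discriminant is $\alpha(\alpha-4)$, vanishing only at $\alpha=4$), and it is exactly this that lets the substitutions $s=-x/(2+x)$, $t=y/(2-y)$ and then $u=s+t$, $v=st$ turn the inequality on the face $z=0$ into the polynomial $h(\zeta)\ge0$ of Lemma~\ref{lem_ineqppf} and pin its minimum to $s=t$. For $\alpha\neq4$ no such rationalization is available, so one must establish the minimum-on-the-diagonal property directly, for instance through a two-variable critical-point analysis of $\Phi$ together with a convexity (or Schur-type) comparison, in the coordinates $s=p^\alpha_{(-1,1)}(x,0)$, $t=p^\alpha_{(-1,1)}(0,y)$, of the values of $\Phi$ at $(s,t)$ and at $(\tfrac{s+t}{2},\tfrac{s+t}{2})$; alternatively one could look for a substitution linearizing $\sqrt{(y-x)^2+\alpha(1+x)(1-y)}$ in the spirit of the hyperbolic substitution used for $p^\alpha_{\R^n\backslash\{0\}}$ in Theorems~\ref{thm_pInR0} and~\ref{pametr}. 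A secondary, and I expect easier, task is the $\alpha>12$ half of the $\R^+$ estimate. With these two in hand, the reductions above yield the conjecture, the constant $c_*(\alpha)$ being sharp for $\B^n$ by the configuration recalled at the start.
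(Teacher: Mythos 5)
This statement is one of the paper's open problems (Section 6): the paper offers no proof of it, only the lower bound $c(\alpha)\ge c_*(\alpha)$ coming from the symmetric triple $x=ke_1$, $z=0$, $y=-ke_1$ in the preceding Remark, which you correctly reuse for the sharpness half. Your reduction of the upper bound is sound as far as it goes: the passage from $\B^n$ to $G_1=\R^n\setminus\{u,v\}$ and then to the segment $\Delta$ is exactly the machinery of Theorem \ref{thm_ppfquasiconstant}, and nothing in those comparison steps uses $\alpha=4$ (the monotonicity of $t\mapsto t/\sqrt{t^2+\gamma^2}$ works with $\gamma^2=\alpha\rho r$), while the restriction to a diameter shows the ball and interval constants coincide. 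So you have correctly reduced the conjecture to: the quasi-metric constant of $p^\alpha_{(-1,1)}$ equals $c_*(\alpha)$.

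But that reduction is the easy part, and the remaining one-dimensional problem — which is the actual content of the conjecture — is not settled in your proposal. Three steps are missing, and you flag two of them yourself: (i) the elimination of interior critical points of $F_\alpha$ is only asserted (''should show''); for $\alpha\neq4$ the second summand is no longer the rational function $g$ of Lemma \ref{lem_ineqppf} but another radical, so the algebra that forced $y=z$ there has to be redone and may not close the same way; (ii) on the face $z=0$ you verify $\Phi\ge0$ only on the anti-diagonal $y=-x$ (where it is the definition of $c_*(\alpha)$) and on the edges, but the claim that the minimum of $\Phi$ over the face is attained on that diagonal is precisely the open point — the substitutions $s=-x/(2+x)$, $t=y/(2-y)$, $u=s+t$, $v=st$ that pin the minimum in Lemma \ref{lem_ineqppf} depend on the collapse $\sqrt{(y-x)^2+4(1+x)(1-y)}=2-x-y$, which fails for every $\alpha\neq4$; (iii) for $\alpha>12$ you also need the quasi-metric constant of $p^\alpha_{\R^+}$ to be at most $c_*(\alpha)$, which Theorem \ref{thm_alphappfR} does not give. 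Until (i)–(iii) are proved, what you have is a correct reduction and a plausible programme, not a proof; the statement remains, as in the paper, a conjecture.
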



{\small
\noindent\textbf{Declarations:}
\textbf{Availability of data and material} Not applicable, no new data was generated.
\textbf{Competing interests} On behalf of all the authors, the corresponding author states that there are no competing interest.
\textbf{Funding} The work of the second author is performed under the development program of Volga Region Mathematical Center (agreement no.~075-02-2022-882). The research of the third author was funded by the University of Turku Graduate School UTUGS.
\textbf{Authors' contributions} DD contributed new ideas and checked the results. SN organized this research and contributed several theorems. OR suggested research ideas and contributed several results. MV did several experiments and suggested problems.
\textbf{Acknowledgments} The authors are grateful to the referees for their work. 
}

\def\cprime{$'$} \def\cprime{$'$} \def\cprime{$'$}
\providecommand{\bysame}{\leavevmode\hbox to3em{\hrulefill}\thinspace}
\providecommand{\MR}{\relax\ifhmode\unskip\space\fi MR }
\providecommand{\MRhref}[2]{%
  \href{http://www.ams.org/mathscinet-getitem?mr=#1}{#2}
}
\providecommand{\href}[2]{#2}


\begin{thebibliography}{10}

\bibitem{bm09}{\sc
I. Bachar and H. M\^aagli,}
On Some Quasimetrics and Their Applications.
\emph{J. Inequal. Appl. 40} (2009), 167403.

\bibitem{crt14}{\sc
F. Castro-Company, S. Romaguera, and P.A. Tirado,}
A fixed point theorem for preordered complete fuzzy quasi-metric spaces and an application.
\emph{J. Inequal. Appl. 122} (2014).

\bibitem{chkv}{\sc
J. Chen, P. Hariri, R. Kl\'en and M. Vuorinen,}
Lipschitz conditions, triangular ratio metric, and quasiconformal maps.
\emph{Ann. Acad. Sci. Fenn. Math. 40} (2015), 683-709.

\bibitem{hkvbook}{\sc
P. Hariri, R. Kl\'en and M. Vuorinen,}
\emph{Conformally Invariant Metrics and Quasiconformal Mappings.}
Springer, 2020.

\bibitem{hkvz}{\sc
P. Hariri, R. Kl\'en, M. Vuorinen and X. Zhang,}
Some Remarks on the Cassinian Metric.
\emph{Publ. Math. Debrecen 90}, 3-4 (2017), 269-285.

\bibitem{hvz}{\sc
P. Hariri, M. Vuorinen and X. Zhang,}
Inequalities and Bilipschitz Conditions for Triangular Ratio Metric.
\emph{Rocky Mountain J. Math. 47}, 4 (2017), 1121-1148.

\bibitem{h}{\sc
P. H\"ast\"o,}
A new weighted metric, the relative metric I. \emph{J. Math. Anal. Appl. 274} (2002), 38-58.

\bibitem{m}{\sc  M. Mocanu,}
Functional Inequalities for Metric-Preserving Functions with
Respect to Intrinsic Metrics of Hyperbolic Type,  \emph{Symmetry 13}, 11, 2072 (2021)

\bibitem{ps09}{\sc
M. Paluszy\'nski and K. Stempak,}
On quasi-metric and metric spaces.
\emph{Proc. Am. Math. Soc. 137} 12, (2009), 4307--4312.

\bibitem{fss}{\sc
O. Rainio,}
Intrinsic quasi-metrics.
\emph{Bull. Malays. Math. Sci. Soc. 44}, 5 (2021), 2873-2891.

\bibitem{inm}{\sc
O. Rainio and M. Vuorinen,}
Introducing a new intrinsic metric.
\emph{Results Math. 77}, 71 (2022), doi: 10.1007/s00025-021-01592-2.

\bibitem{sinb}{\sc
O. Rainio and M. Vuorinen,}
Triangular ratio metric in the unit disk.
\emph{Complex Var. Elliptic Equ. 67}, 6 (2022), 1299-1325, doi: 10.1080/17476933.2020.1870452.

\bibitem{sqm}{\sc
O. Rainio and M. Vuorinen,}
Triangular ratio metric under quasiconformal mappings in sector domains. \emph{Comput. Methods Func. Theory} (2022), doi: 10.1007/s40315-022-00447-3.

\bibitem{sra09}{\sc
M. Sarwar, M.U. Rahman, and G. Ali,}
Some fixed point results in dislocated quasi metric (dq-metric) spaces.
\emph{J. Inequal. Appl. 278} (2014).

\bibitem{s15}{\sc
K. Stempak,}
On some structural properties of spaces of homogeneous type.
\emph{Taiwan. J. Math. 19} 2, (2015), 603--613.

\bibitem{x09}{\sc
Q. Xia,}
The geodesic problem in quasimetric spaces.
\emph{J. Geom. Anal. 19} (2009), 452--479.

\end{thebibliography}
\end{document}